\documentclass[a4paper,10pt]{article}
\usepackage[english]{babel}
\usepackage{amssymb,amsmath,amsthm, mathabx, mathrsfs, enumitem, tikz-cd, auto-pst-pdf, pst-node}

\usepackage[margin=1.3in]{geometry}
\usepackage{color}
\definecolor{pink}{rgb}{1,0,1}
\definecolor{dgreen}{HTML}{006400}
\newcommand{\pink}[1]{{\color{pink}{#1}}}

\def\beq{\begin{eqnarray}}
\def\eeq{\end{eqnarray}}
\newcommand{\nn}{\nonumber}
\usepackage{amsrefs}

\usepackage[colorlinks=true, linkcolor=blue,  citecolor=red]{hyperref}

\theoremstyle{plain} 
 
\def\XXint#1#2#3{{\setbox0=\hbox{$#1{#2#3}{\int}$} 
  \vcenter{\hbox{$#2#3$}}\kern-.5\wd0}} 

\newtheorem{thm}{Theorem}[section] 
 
\newtheorem{lem}[thm]{Lemma} 
\newtheorem{prop}[thm]{Proposition} 
\newtheorem{conjecture}[thm]{Conjecture} 
\theoremstyle{definition}
\newtheorem{defn}[thm]{Definition}
\newtheorem{remark}[thm]{Remark}

\newcommand{\Z}{\mathbb{Z}}
\newcommand{\R}{\mathbb{R}}
\newcommand{\C}{\mathbb{C}}
\newcommand{\tr}{\operatorname{Tr}}
\newcommand{\diag}{\operatorname{diag}}
\newcommand{\cQ}{\mathcal{Q}}

\title{Global Extrema of the Zeta Regularized Determinant on Orthogonal Flat Tori}
\author{Fabio Francesconi \and Julie Rowlett}
\begin{document}
\maketitle 

\begin{abstract}
 The search for extremal geometries is a central theme in several areas of mathematics. Here, we address the following question: among all n-dimensional orthogonal tori of unit volume, which one maximizes the zeta regularized determinant of the Laplacian?  We prove that the equilateral torus is the unique maximizer in each dimension n, for all n greater than or equal to 2, validating Sarnak's conjecture in this context.  We also investigate the analogous question for the Laplacian on Euclidean boxes with the Neumann and Dirichlet boundary conditions. For orthogonal flat tori of unit volume and dimension $n$, we show further that the determinant is strictly decreasing with the dimension and tends to zero as the dimension $n$ tends to infinity. 
\end{abstract}

\section{Introduction} \label{s:intro}
The zeta regularized determinant extends the notion of the determinant of a finite-dimensional matrix operator to certain infinite-dimensional differential operators, most notably the Laplacian on a Riemannian manifold.  On a compact Riemannian manifold, the Laplace operator has a discrete spectrum 
\[ 0 = \lambda_0 < \lambda_1 \leq \lambda_2 \leq \cdots \leq \lambda_k \leq \cdots \to \infty. \]
The associated spectral zeta function is defined analogously to the Riemann zeta function, by replacing the positive integers with the positive eigenvalues of the Laplacian, 
\[ \zeta(s) = \sum_{k \geq 1} \lambda_k ^{-s}.\]
By Weyl's Law, the spectral zeta function for a closed $n$-dimensional Riemannian manifold converges for $s \in \C$ with real part larger than $\frac n 2$.  

For a matrix $M$ with eigenvalues $\Lambda_1, \ldots, \Lambda_m$, we have the relationship between the zeta function 
\[ \sum_{k=1} ^m \Lambda_k^{-s},\]  and the determinant of the matrix 
\[ \det(M) =  \prod_{k =1} ^m  \Lambda_k = e^{-\zeta'(0)}.\]
It was first observed by Ray in 1970 \cite{ray1970} and developed in 1971 together with Singer \cite{raysinger} that the right side of this equation remains well defined for the spectral zeta function associated to the Laplacian on a compact Riemannian manifold.  It is a straightforward exercise to prove that the spectral zeta function can be related to the heat trace via 
\beq \zeta (s) = \frac{1}{\Gamma(s)} \int_0 ^\infty t^{s-1} \left( \tr e^{-t \Delta} - \Pi_{\ker(\Delta)} \right) dt. \label{eq:zeta_heat} \eeq 
This, together with the short-time asymptotic expansion of the heat trace can be used to prove that the spectral zeta function admits a meromorphic extension to the complex plane that is holomorphic in a neighborhood of $s=0$.  In this way, the zeta regularized determinant was defined by Ray and Singer \cite{ray1970,raysinger}.  

Ray was inspired by the Franz-Reidemeister torsion, a topological invariant defined by a formula involving certain determinants constructed from the boundary matrices of the universal covering of a complex. In this setting, de Rham's theorem relates the combinatorial cohomology of a closed oriented manifold to the cohomology of differential forms \cite{deRham}.  Ray thought that perhaps there could be an analytic analogue to the R-torsion involving the exterior differential for forms.  His idea was correct, and together with Singer he proved that this analytic torsion is independent of the choice of metric.  Ray and Singer conjectured that the analytic torsion is in fact equal to the R-torsion and proved that this holds on lens spaces.  In 1978, Cheeger \cite{cheeger1, cheegertorsion} and M\"uller \cite{mullertorsion} independently proved the equality of the R-torsion and the analytic torsion on closed Riemannian manifolds.  

Around the same time as the Cheeger-M\"uller Theorem was being proven, Hawking observed that the zeta function regularization technique could be used in physics to regularize quadratic path integrals on a curved background space-time \cite{hawking}. One forms a generalized zeta function from the eigenvalues of the differential operator that appears in the action integral and defines the determinant of the operator via $e^{-\zeta'(0)}$. This agrees with dimensional regularization where one generalizes to $n$ dimensions by adding extra flat dimensions. The zeta function regularization technique quickly gained popularity and utility in physics \cite{10useszeta, kkbook}.  

While zeta regularization was being used to define determinants of operators in physics, connections to other mathematical zeta functions, like the Selberg zeta function in number theory, were being explored.  In 1988, Sarnak \cite{sarnak1}, building on ideas of Vigneras \cite{vigneras}, proved that certain determinants of Laplacians, acting on the spinor fields on a compact Riemann surface, can be expressed in terms of the Selberg zeta function and a special Barnes gamma-function.  Sarnak observed that the zeta regularized determinant is profoundly related to the geometry of the surface on which it is defined and continued exploring this together with Osgood and Phillips. Their study of the extremal properties of the determinant \cite{ops1} culminated in the quantification of  isospectral sets of surfaces \cite{ops2} and planar domains \cite{ops3}.     

One of the key results in \cite{ops1} is that the determinant is monotone if one deforms the Riemannian metric according to the Ricci flow; see also \cite{kkmonotone}.  Interestingly, in odd dimensions, the story changes dramatically.  In 2001, Okikiolu showed that the zeta regularized determinant has no local maxima if the dimension is congruent to $1$ mod 4, and no local minima if the dimension is congruent to $3$ mod 4 \cite{okikiolu}.  To the best of our knowledge, the next monotonicity result for the variation of the determinant in a similar spirit to \cite{ops1} was obtained in 2013 by Albin, Aldana, and Rochon \cite{aarmonotone}.  They proved that on a non-compact surface whose ends are asymptotic to hyperbolic funnels or cups, the Ricci flow converges to a metric of constant curvature, and the determinant increases along this flow.

Indeed, the search for extremal geometries is a central theme in several areas of mathematics. In this work, we address the following question: \textit{among all $n$-dimensional orthogonal tori of volume $1$, which one maximizes the determinant of the Laplacian?}  If one considers general (not necessarily orthogonal) flat tori, this problem has been studied in low dimensions. In dimension $2$, Osgood, Phillips, and Sarnak \cite{ops1} showed that the maximizer is given by the torus associated with the hexagonal lattice. In dimension $3$, Sarnak and Strömbergsson \cite{sarnakstrom} identified the global maximizer while studying extrema of the heights of flat tori.  The height is defined to be $\zeta'(0)$ and thus its study is equivalent to the study of the zeta regularized determinant.  In particular, the determinant achieves its maximum precisely when the height achieves its minimum. According to \cite{chiu}, Sarnak conjectured that the global behavior of the height, or equivalently the zeta regularized determinant, on flat tori is controlled by the geometry in the following sense.  
\begin{conjecture}[Sarnak, Conjecture 4.5 in \cite{chiu}]
Among all $n$-dimensional flat tori of volume $1$, the height is minimized by the torus corresponding to the lattice with the longest minimal vector. 
\end{conjecture}
Here, we prove Sarnak's conjecture in the case of orthogonal flat tori.  To precisely state our result, we recall the relevant definitions.

\begin{defn} \label{def:basics}
An \em orthogonal lattice \em is a subset of $\R^n$ of the form 
$ D \Z^n$, where $D$ is a diagonal invertible $n \times n$ matrix.  An $n$-dimensional \em orthogonal torus \em is a smooth compact Riemannian manifold obtained as the quotient space of $\R^n$ by an orthogonal lattice.  We use $\diag(a_1, \ldots, a_n)$ to denote the diagonal matrix with diagonal entries $a_1, \ldots, a_n$.     
\end{defn}

\begin{thm} \label{th:awesome_torus}
     For each $n \geq 2$, among all orthogonal $n$-dimensional tori with volume equal to one, the zeta regularized determinant of the Laplacian is uniquely maximized by the torus $\R^n/\Z^n$. 
\end{thm}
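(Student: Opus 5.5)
We reduce the problem to the heat trace. For the torus $T_a=\R^n/\diag(a_1,\dots,a_n)\Z^n$ with $\prod a_j=1$, the spectrum is $\{4\pi^2\sum_j k_j^2/a_j^2 : k\in\Z^n\}$. The heat trace therefore factors as a product of one-dimensional theta functions,
\[ \tr e^{-t\Delta}=\prod_{j=1}^n \theta(t/a_j^2),\qquad \theta(x)=\sum_{k\in\Z}e^{-4\pi^2k^2x}. \]
We use \eqref{eq:zeta_heat}, splitting the integral at $t=1$. On $[1,\infty)$ we integrate $\tr e^{-t\Delta}-1$ directly. On $(0,1]$ we first apply Poisson summation in each factor, $\theta(x)=(4\pi x)^{-1/2}\,\widehat\theta(1/(16\pi^2 x))$ with $\widehat\theta(y)=\sum_m e^{-m^2/(4y)}$ suitably normalized, and subtract the leading term $(4\pi t)^{-n/2}$, which is volume one. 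This gives an explicit formula for $-\zeta'(0)=\log\det\Delta$ as a sum of elementary constants plus integrals of the form
\[ \int_0^\infty \Big(\prod_j \theta(t/a_j^2)-1-\text{(singular term)}\Big)\,w(t)\,dt. \]
Each integrand depends on $a$ only through $\prod_j \theta(t/a_j^2)$ and its dual. Equivalently, one can use the standard Chowla–Selberg/Epstein-type representation. A cleaner symmetric form arises from the Jacobi-type identity: applying Poisson summation to the whole lattice shows that $\log\det$ is $-\int_0^\infty t^{-1}\big(F_a(t)-\ldots\big)dt$, where $F_a(t)=\sum_{\lambda\neq 0}e^{-t\lambda}$ is combined with its dual lattice term.

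The key step is to set $a_j=e^{x_j}$ with $\sum x_j=0$. Then $\log\theta(t e^{-2x_j})$ is a one-variable function $g_t(x_j)$. We aim to show that for each fixed $t$ the map $x\mapsto \sum_j g_t(x_j)$, and hence $\prod_j\theta$, is minimized on the hyperplane $\sum x_j=0$ exactly at $x=0$. By Jensen, it suffices that $g_t$ is strictly convex in $x$, that is, $u\mapsto \log\theta(e^{u})$ is strictly convex in $u=\log$ of the argument. Writing $\theta(e^u)=\sum_k e^{-4\pi^2k^2e^u}$, the second derivative in $u$ is a variance-type expression. Concretely, with $\phi(u)=\log\theta(e^u)$ we have $\phi''=\mathrm{Var}(-4\pi^2k^2e^u)+E(-4\pi^2k^2e^u)$, which has indefinite sign. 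So convexity is not automatic, and we will use the theta transformation $\theta(x)\sim x^{-1/2}\theta^*(1/x)$, which makes $\phi(u)+u/2$ reflect to the same type of function. We need convexity on each half-line $u\ge u_0$ and $u\le u_0$ separately, using the dominant terms and explicit estimates.

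Since $\log\det$ must be \emph{decreasing} in the heat trace (more small eigenvalues lower the determinant), pointwise minimality of the heat trace and of its Poisson dual at the cube, for every $t$, gives $\det\Delta_{T_a}\le\det\Delta_{T_{(1,\dots,1)}}$. Strictness comes from strict convexity. The monotone dependence must be checked carefully in the split formula, because the subtracted constants are independent of $a$ once the volume is fixed. This is exactly why we normalize to unit volume.

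The main obstacle is the convexity of $u\mapsto\log\theta(e^u)$. It is plausibly true: it is the one-dimensional statement that the cube maximizes $\det$ among rectangles of fixed area. I would first try to prove it via the product formula $\theta(x)=\prod_m(1-q^{2m})(1+q^{2m-1})^2$ with $q=e^{-4\pi^2x}$. Each factor $\log(1+e^{-ce^u})$ and $\log(1-e^{-ce^u})$ can be tested for convexity in $u$. The $(1+\cdot)$ factors are not obviously convex, so I would handle small and large $u$ separately, using the modular symmetry to transfer the large-$u$ regime to the small-$u$ one. If full convexity fails, I would fall back to showing directly, via the integral representation, that the Hessian of $\log\det$ restricted to $\sum x_j=0$ is negative definite, combined with a two-variable reduction: fixing all but two of the $a_j$ and proving the two-variable rectangle inequality.
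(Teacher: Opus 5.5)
Your proposal is correct, and it takes a genuinely different route from the paper. The paper first proves that a maximizer exists (Theorem~\ref{maxtorus}). It then writes the Lagrange conditions for $F(Q)$ under $\prod_j q_j^2=1$ and uses the sign of the integrand in \eqref{eq:almostdone} to show that $q_i\neq q_j$ is impossible at a critical point, so the unique critical point is the maximizer. You instead work directly with the unit-volume Riemann representation. There all $Q$-dependence of $\zeta'(0)$ sits in
\[ F(Q)=\int_1^\infty\Big[\Big(\prod_j\theta_{1/q_j}(t)-1\Big)t^{-1}+\Big(\prod_j\theta_{q_j}(t)-1\Big)t^{n/2-1}\Big]\,dt, \]
and you apply Jensen to $\sum_j\phi(\log t\mp 2x_j)$, where $\phi(u)=\log\theta(e^u)$, $q_j=e^{x_j}$ and $\sum_j x_j=0$. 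This gives $\prod_j\theta_{1/q_j}(t)\ge\theta(t)^n$ and $\prod_j\theta_{q_j}(t)\ge\theta(t)^n$ for every $t>0$, with equality only at $Q=I$. Hence $F(Q)>F(I)$ for $Q\neq I$.

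Your route buys several things. You need no compactness argument and no Lagrange multipliers, and uniqueness comes directly from strict convexity. You also get a stronger pointwise statement: the heat trace of every unit-volume orthogonal torus, and of its dual, dominates that of $\R^n/\Z^n$ for all $t$. This gives the same extremality for any positive-weight integral of these theta functions, e.g.\ $\zeta_E(s,Q^{-1})$ for real $s>n/2$. The paper's computation is essentially the first-order form of your Jensen inequality. That is why it needs the existence theorem to upgrade ``unique critical point'' to ``global maximum''.

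The ingredient you leave open is not an extra difficulty relative to the paper. Since $\phi'(u)=x\theta'(x)/\theta(x)$ at $x=e^u$, strict convexity of $\phi$ is exactly the strict monotonicity of $u\theta'(u)/\theta(u)$ that the paper imports from Faulhuber (Prop.~3.13 there), so you can cite it.

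If you want to prove it yourself, drop the factor-by-factor test on the triple product. With $y=ce^u$ one has $\frac{d^2}{du^2}\log(1-e^{-y})=\frac{y\,(e^y(1-y)-1)}{(e^y-1)^2}<0$, so every $(1-q^{2m})$ factor is concave in $u$, for all $u$. Your symmetry idea does work:
\begin{itemize}
\item $\theta(x)=x^{-1/2}\theta(1/x)$ gives $\phi(u)=-u/2+\phi(-u)$, hence $\phi''(u)=\phi''(-u)$, so it suffices to treat $u\ge0$.
\item On $u\ge0$, write $\phi''=\mathrm{Var}(X)+E(X)$ with $X=-\pi k^2e^u$. Its leading behaviour is $2\pi x(\pi x-1)e^{-\pi x}>0$ for $x=e^u\ge1$.
\item The remaining terms are $O(x^2e^{-2\pi x})$, which an explicit tail estimate bounds by the leading term.
\end{itemize}

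One minor slip: in your normalization the Poisson formula should read $\theta(x)=(4\pi x)^{-1/2}\sum_m e^{-m^2/(4x)}$. As written, $\widehat\theta(1/(16\pi^2x))$ is just $\theta(x)$ again.
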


\begin{remark}
To see that this is equivalent to Sarnak's conjecture in the case of orthogonal tori, we note that the determinant is maximized precisely when the height is minimized.  When the volume is equal to one, the orthogonal torus corresponding to the lattice with the longest minimal vector is precisely $\R^n/\Z^n$. 
\end{remark}

In the setting of string theory, toroidal compactifications on $\mathbb{R}^n/\Lambda$ give rise to two distinguished classes of states: Kaluza--Klein momentum modes, whose charges are quantized by the dual lattice $\Lambda^*$, and winding modes, whose charges are quantized by $\Lambda$. A fundamental symmetry of the theory, known as T-duality, interchanges these two lattices via the correspondence $\Lambda \leftrightarrow \Lambda^*$. The fixed points of this duality are therefore given by self-dual lattices. Within the class of orthogonal unit-volume flat tori considered here, the self-dual point is realized by the standard torus $\mathbb{R}^n/\mathbb{Z}^n$. Theorem~\ref{th:awesome_torus} shows that this same torus admits a characterization in terms of spectral geometry: it uniquely maximizes the zeta regularized determinant of the Laplacian among all unit-volume orthogonal flat tori. This provides a spectral-theoretic characterization of the T-duality fixed point in terms of an extremal property of a natural functional on the moduli space of flat tori, namely the determinant, or equivalently, the height. 

The determinant on two dimensional orthogonal tori was studied by Faulhuber \cite{faulhuber} in 2021. In that case, the global maximum is obtained by reducing the problem to maximizing the Dedekind eta function along a ray in the upper half-plane. This is achieved by exploiting the product decomposition of the eta function in terms of Jacobi theta functions. The result then follows from suitable logarithmic convexity and concavity properties of these theta functions. In fact, the result may be obtained from a simpler problem.  In 2017, Rowlett and Aldana \cite{juliealdana} found the global maximum of the regularized determinant of the Laplancian with the Dirichlet boundary condition on the $2$ dimensional box domain with unit area 
\[[0,a]\times[0,1/a], \qquad a >0.\]
The authors showed that the maximum is achieved on the square box, i.e. $a=1$. One can show that these two-dimensional problems \cite{faulhuber, juliealdana} are equivalent; seee \S \ref{ss:twodim}.  In two dimensions, the analysis ultimately relies on the Dedekind eta function.  Unfortunately, the Dedekind eta function does not admit a natural extension to higher dimensions and a similar approach is hard to generalize. 

\subsection{Structure of the paper} 
To study the zeta regularized determinant in higher dimensions, we begin by relating the spectral zeta function of an orthogonal torus to the Epstein zeta function in \S \ref{s:2}.  We use this to prove in Theorem \ref{maxtorus} that the zeta regularized determinant on the space of orthogonal flat tori with volume one attains a maximum.  In \S \ref{s:3}, we use Riemann's representation of the Epstein zeta function together with reformulations of the expressions we obtain in terms of several one-dimensional theta functions to prove Theorem \ref{th:awesome_torus}.  We conclude in \S \ref{s:4} with the analogous problem on Euclidean boxes with respect to the Neumann and Dirichlet boundary conditions.  In the Neumann case, the equilateral box is also the unique maximizer of the determinant among all n-dimensional boxes with unit volume. The Dirichlet case is surprisingly intricate. We prove that at least one global maximizer exists and show that the equilateral box satisfies a necessary condition to be a maximizer. We conclude in \S \ref{s:dimension} with an analysis of the determinant on the unit equilateral orthogonal torus of dimension $n$ as the dimension tends to infinity, showing that the determinant decreases monotonically to zero as the dimension increases.
 
\subsection*{Acknowledgements} 
The authors are grateful to Dennis Eriksson for posing the question which motivated our investigation in \S \ref{s:dimension}.

\section{The zeta regularized determinant of a flat torus} \label{s:2} 
In the special case of an $n$-dimensional orthogonal torus, the eigenvalues of the Laplacian are explicitly given by
\[
\lambda_{k} = 4\pi^2 \sum_{j=1}^n \frac{k_j^2}{q_j^2}, 
\qquad k = (k_1,\dots,k_n) \in \mathbb{Z}^n.
\]
Here, the torus is obtained as a quotient $\R^n/\diag(q_1, \ldots, q_n)\Z^n$.  With this in mind it is natural to define the matrices $Q=\diag(q_1^2, \ldots, q_n^2)$ and its inverse $Q^{-1}$.   Then, up to the factor of $4\pi^2$,  the Laplace eigenvalues are given by the values of the quadratic form associated with $Q^{-1}$ over $\mathbb{Z}^n$. The  torus spectral zeta function $\zeta_\circ(s)$ can therefore be expressed in terms of the Epstein zeta function:
\beq \zeta_E(s,Q) := \sum_{k \in \Z^n} ' (Q[k])^{-s}, \quad \zeta_\circ (s) = (2\pi)^{-2s} \zeta_E (s,Q^{-1}). \label{eq:epsteinz} \eeq 
Here 
\[Q[k]= k^{T} Q k, \qquad k\in\mathbb{Z}^n,\]
and the prime on the sum indicates that we are excluding $0 \in \Z^n$ from the sum.  The advantage of this formulation is that the Epstein zeta function has been extensively studied; a non-exhaustive list of references is  \cite{Epstein}, \cite{terras}, \cite{sarnakstrom}, \cite{Terras2}.  
One can show by various means including Poisson summation that \eqref{eq:epsteinz} converges for real part of $s>n/2$; see \cite{sarnakstrom}.  Moreover, $\zeta_E(s,Q)$ has an analytic continuation to $\mathbb{C}$ except for a simple pole at $s=n/2$ with residue
\[\pi^{n/2}\Gamma\left(\frac{n}{2}\right)^{-1}.\]
When $Q$ is diagonal and the absolute value of its determinant, denoted $|Q|$, is equal to one, we have the functional equation,
\[\zeta_E(s,Q^{-1})=\pi^{2s-n/2}\frac{\Gamma(n/2-s)}{\Gamma(s)}\zeta_E(n/2-s,Q).\]
We compute the derivative of the spectral zeta function for the orthogonal torus 
\[
\zeta'_\circ(s)
= -2 \log(2\pi)\,(2\pi)^{-2s} \zeta_E(s,Q^{-1})
+ (2\pi)^{-2s} \zeta'_E(s,Q^{-1}).
\]
Taking the limit as $s \to 0$ and using the functional equation as well as the residue at $n/2$, one finds
\beq 
\zeta_E(0, Q^{-1}) = -1, \label{eq:zetaE(0)}
\eeq 
which is independent of $Q$. 

Therefore,
\beq
\zeta'_\circ(0)
= 2 \log(2\pi) + \zeta'_E(0, Q^{-1}).
\label{eq:zetacirc_zetae} \eeq

This shows that the determinant of the Laplacian on flat tori can be viewed as a function on the space of lattices via the associated quadratic form $Q^{-1}$, and that the corresponding variational problem reduces to studying the extrema of $\zeta'_E(0, Q^{-1})$. With this aim, the following well-known result due to Riemann will be useful.  The result can be obtained from the analytic continuation of the zeta function and properties of theta functions; see \cite{terras}.  
\begin{lem}[Riemann]
    The Epstein zeta function $\zeta_E(s,Q)$ may be expressed as
    \begin{align}
        \zeta_E(s,Q)=&\frac{\pi^s}{\Gamma(s)}\left(\frac{|Q|^{-1/2}}{s-n/2}-\frac{1}{s}\right) \nonumber\\
        & + \frac{\pi^s}{\Gamma(s)} 
         \left\{\sum_{a\in\mathbb{Z}^n}'\int_1^\infty \left( e^{-\pi Q[a]t}t^{s-1}+|Q|^{-1/2}e^{-\pi Q^{-1}[a]t} t^{n/2-s-1}\ \right) dt\right\}.\end{align} 
   Here $|Q| = |\det(Q)|$.  Now, assume $|Q| = 1$. Then  
    \begin{align}
        \zeta_E(s,Q^{-1})
         &= \frac{\pi^s}{\Gamma(s)} \left( \frac{1}{s - n/2}    - \frac{1}{s} \right) \nonumber \\
        & + \frac{\pi^s}{\Gamma(s)} 
         \left\{ \sum_{a \in \mathbb{Z}^n }' 
       \int_{1}^{\infty} \left( e^{-\pi Q^{-1}[a]t} \, t^{s-1}  
    +  e^{-\pi Q[a]t} \, t^{\,n/2 - s-1}\ \right) dt \right\}.
    \end{align}
    \label{riemann}
\end{lem}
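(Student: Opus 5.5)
We prove the first identity via the Mellin transform of the theta function, splitting the integral at $t=1$ and using Poisson summation on the piece over $(0,1)$. The second identity is then a substitution.

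\textbf{Mellin representation.} Set $\theta_Q(t)=\sum_{a\in\Z^n} e^{-\pi Q[a]t}$ for $t>0$. For $\mathrm{Re}(s)>n/2$, the integral $\int_0^\infty e^{-\pi Q[a]t}t^{s-1}\,dt=\Gamma(s)\pi^{-s}Q[a]^{-s}$ can be summed over $a\neq 0$. Absolute convergence (a lattice point count, since $Q$ is positive definite) justifies Fubini, and gives
\[ \pi^{-s}\Gamma(s)\zeta_E(s,Q)=\int_0^\infty (\theta_Q(t)-1)t^{s-1}\,dt. \]
We split this as $\int_0^1+\int_1^\infty$. The piece over $(1,\infty)$ is already the first sum in the claimed formula, after interchanging sum and integral.

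\textbf{Poisson summation on $(0,1)$.} Poisson summation for the Gaussian $x\mapsto e^{-\pi t Q[x]}$, whose Fourier transform is $t^{-n/2}|Q|^{-1/2}e^{-\pi Q^{-1}[\xi]/t}$, gives the theta inversion
\[ \theta_Q(t)=|Q|^{-1/2}t^{-n/2}\theta_{Q^{-1}}(1/t). \]
On $(0,1)$ we write
\[ \theta_Q(t)-1=|Q|^{-1/2}t^{-n/2}\bigl(\theta_{Q^{-1}}(1/t)-1\bigr)+|Q|^{-1/2}t^{-n/2}-1. \]
The elementary terms integrate, for $\mathrm{Re}(s)>n/2$, to
\[ \frac{|Q|^{-1/2}}{s-n/2}-\frac1s. \]
For the remaining term we substitute $t\mapsto 1/t$, which sends $t^{s-1}\,dt$ to $t^{-s-1}\,dt$ and turns $t^{-n/2}$ into $t^{n/2}$. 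This produces
\[ |Q|^{-1/2}\int_1^\infty\bigl(\theta_{Q^{-1}}(t)-1\bigr)t^{n/2-s-1}\,dt, \]
which is the second sum in the formula. Multiplying through by $\pi^s/\Gamma(s)$ gives the first identity for $\mathrm{Re}(s)>n/2$.

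\textbf{Meromorphic continuation.} The right-hand side extends to all of $\C$. Both integrals over $[1,\infty)$ converge for every $s$, locally uniformly, because $\theta-1$ decays like $e^{-c t}$ with $c>0$ the minimum of the form on $\Z^n\setminus\{0\}$. Hence they are entire in $s$. The identity therefore holds on $\C$ by analytic continuation, and in particular it defines the continuation of $\zeta_E(s,Q)$.

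\textbf{The case $|Q|=1$.} The second formula follows by applying the first with $Q$ replaced by $Q^{-1}$. Since $|Q^{-1}|=1$, the factors $|Q|^{-1/2}$ disappear, and $(Q^{-1})^{-1}=Q$ swaps the roles of the two exponentials, giving exactly the displayed expression.

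The only step needing care is justifying the interchanges of sum and integral and the analytic continuation. Both rest on the uniform exponential bound on $\theta-1$ for $t\ge1$, together with convergence of $\sum' Q[a]^{-\mathrm{Re}(s)}$ for $\mathrm{Re}(s)>n/2$.
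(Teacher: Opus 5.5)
Your proof is correct. The Mellin representation, the split at $t=1$, theta inversion by Poisson summation on $(0,1)$ followed by $t\mapsto 1/t$, and then applying the general formula to $Q^{-1}$ with $|Q^{-1}|=1$ form exactly the standard derivation of Riemann's formula; the paper does not write out a proof but cites Terras for this same theta-function argument, so your route matches the intended one.
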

 We take the derivative $\frac{d}{ds}|_{s=0}\zeta_E(s,Q^{-1})$ using the integral expression. Observe that the first term is analytic and independent of $Q$, therefore it contributes only a constant:
 \[\frac{d}{ds}\frac{\pi^s}{\Gamma(s)} \left( \frac{1}{s - n/2}    - \frac{1}{s} \right)\bigg|_{s=0}=C.\]
We set
\[
F_Q(s) := \sum_{a \in \mathbb{Z}^n }' 
       \int_{1}^{\infty}\left( e^{-\pi Q^{-1}[a]t} \, t^{s-1}  
    +  e^{-\pi Q[a]t} \, t^{\,n/2 - s-1}\ \right) dt\]
Thus,
\begin{equation}
\left. \frac{d}{ds} \zeta_E(s,Q^{-1}) \right|_{s=0}
= C + \left[
\frac{\pi^s}{\Gamma(s)} F'_Q(s)
+ \frac{\pi^s \log \pi}{\Gamma(s)} F_Q(s)
+ \frac{\pi^s}{\Gamma(s)} \left( \frac{-\Gamma'(s)}{\Gamma(s)} \right) F_Q(s)
\right]\bigg|_{s=0}.
\label{eq6}
\end{equation}
At $s=0$, both $F_Q(0)$ and $F'_Q(0)$ converge due to exponential decay, while $1/\Gamma(0)=0$, so the first two terms vanish.  
Finally, the factor
\[
\frac{\pi^s}{\Gamma(s)} \left( \frac{-\Gamma'(s)}{\Gamma(s)} \right)
\]
is equal to one when evaluated at $s=0$. Hence, in order to study the extrema of \eqref{eq:zetacirc_zetae} it is sufficient to analyze the behavior of 
\begin{equation}
F(Q):=F_Q(0) = \sum_{a \in \mathbb{Z}^n}'  \int_1^\infty \left( e^{-\pi Q^{-1}[a]t}t^{-1}
+ e^{-\pi Q[a]t}\, t^{\,n/2 - 1}\ \right) dt.
\label{eq26}
\end{equation}

Chiu proved in \cite{chiu}, Theorem 3.1, that the height of $n$-dimensional flat tori of volume $1$ attains a minimum. In our setting, we restrict to the subspace of orthogonal flat tori with unit volume.  These may be parametrized by 
\[\mathcal{Q} =  \{ q=(q_1, \ldots, q_n) \in \R^n : \prod_{j=1} ^n q_j = \pm 1 \}.\]
This is a closed, non-compact subset of $\R^n$.  Our first step towards finding a maximizer of the determinant, equivalently a minimizer of the height, is to prove that maximizer(s) exist.  

\begin{thm}
  The zeta regularized determinant of the Laplacian on the space of orthogonal flat tori $\mathbb{R}^n/\Gamma$ with volume 1 attains a maximum.
  \label{maxtorus}
\end{thm}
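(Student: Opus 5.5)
By \eqref{eq:zetacirc_zetae} and the computation leading to \eqref{eq26}, $\log\det\Delta = -\zeta'_\circ(0) = -2\log(2\pi) - C - F(Q)$ on unit-volume orthogonal tori. It therefore suffices to show that the continuous function $q\mapsto F(Q)$, with $Q=\diag(q_1^2,\dots,q_n^2)$, attains a minimum on $\cQ$. Since only $q_j^2$ enters, we may restrict to $q_j>0$.

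\textbf{Continuity.} On any compact set $\{q\in\cQ:\ \epsilon\le q_j\le \epsilon^{-1}\ \forall j\}$, the series in \eqref{eq26} is dominated, uniformly in $q$, by
\[
\sum_{a}' \int_1^\infty \left(e^{-\pi\epsilon^2|a|^2 t}\,t^{-1} + e^{-\pi\epsilon^2|a|^2 t}\, t^{n/2-1}\right) dt<\infty .
\]
Hence $F$ is continuous there, and thus continuous on all of $\cQ$.

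\textbf{Coercivity.} We show $F(Q)\to+\infty$ whenever $q$ leaves every such compact set. All terms in \eqref{eq26} are positive, so we may drop terms freely. With the constraint $\prod q_j=1$, leaving every compact set means some $q_j\to 0$, and then necessarily some $q_i\to\infty$. We expect to need only one of the two pieces:

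\begin{itemize}
\item Suppose $q_1\to 0$. Keep only the lattice points $a=(0,\dots,0,m,0,\dots)$ in a coordinate $i$ with $q_i\to\infty$, in the first integral; there $Q^{-1}[a]=m^2/q_i^2$.
\item Summing over $m\neq 0$, we get
\[
\sum_{m\ne0}\int_1^\infty e^{-\pi m^2 t/q_i^2}\,t^{-1}\,dt \;\ge\; \int_1^\infty t^{-1}\left(\theta(t/q_i^2)-1\right)dt,
\]
where $\theta(x)=\sum_{m\in\Z} e^{-\pi m^2 x}$.
\item Using $\theta(x)\ge c\,x^{-1/2}$ for small $x$ (for instance via the Jacobi transformation $\theta(x)=x^{-1/2}\theta(1/x)\ge x^{-1/2}$), the right-hand side is at least roughly
\[
\int_1^{q_i^2} t^{-1}\left(q_i t^{-1/2}-1\right)dt \;\sim\; 2q_i-2\log q_i\to\infty .
\]
\end{itemize}

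So the first term alone blows up. Alternatively, one can use the second term along a coordinate $j$ with $q_j\to0$, where $Q[a]=q_j^2m^2$ is small, giving growth like $q_j^{-n}$-type behavior. Either route shows $F\to\infty$ at the "boundary" of $\cQ$.

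\textbf{Conclusion.} Let $M$ be the value of $F$ at $q=(1,\dots,1)$. By coercivity there is an $\epsilon>0$ such that $F>M$ outside the compact set $K_\epsilon=\{\epsilon\le q_j\le\epsilon^{-1}\}\cap\cQ$. The continuous function $F$ attains a minimum on $K_\epsilon$, and this is a global minimum on $\cQ$. Equivalently, the determinant attains its maximum.

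\textbf{Main obstacle.} The delicate step is the coercivity estimate. It has to be uniform in the other coordinates, which may themselves degenerate simultaneously. We handle this by using only terms that depend on a single coordinate $q_i$ and by positivity of all remaining terms, so the other coordinates never enter the bound. The remaining care is the precise lower bound on the one-dimensional theta sum, where the Jacobi transformation suffices.
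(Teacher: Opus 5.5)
Your proof is correct and follows the same skeleton as the paper's. Both use positivity of every term in $F(Q)$, a blow-up estimate from lattice points on a single coordinate axis, the volume constraint to link the two kinds of degeneration, and then continuity on a compact set. The difference is which half of $F(Q)$ supplies the blow-up.

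\textbf{The paper's estimate.} It keeps only $a=e_j$ in the second integral, for the smallest side $q$, and uses $t^{n/2-1}\ge 1$ to get $F(Q)\ge e^{-\pi q^2}/(\pi q^2)\to\infty$. So it controls $q\to 0$ directly and uses $\prod_j q_j=1$ to reduce the case $q_i\to\infty$ to that one.

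\textbf{Your estimate.} You work in the dual picture. You keep the whole axis $\Z e_i$ in the first integral for a large side and use Jacobi's transformation to get $F(Q)\ge 2q_i-2-2\log q_i$. You then use the constraint in the opposite direction: some $q_j\to 0$ forces $\max_i q_i\ge q_j^{-1/(n-1)}\to\infty$.

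\textbf{Comparison.} The paper's bound is more elementary, since it needs one lattice vector and no modular identity. Yours is equally valid, and the Jacobi step is in fact unnecessary: the term $m=1$ alone equals $E_1(\pi/q_i^2)=2\log q_i+\mathcal{O}(1)\to\infty$. On the other hand, your write-up is more careful at two points the paper passes over quickly.
\begin{itemize}
\item You justify continuity of $F$ by a uniform dominating series on $\{\epsilon\le q_j\le\epsilon^{-1}\}$.
\item You confine the sublevel set $\{F\le F(1,\dots,1)\}$ to a compact set, whereas the paper speaks of where the minimum lies before its existence has been established.
\end{itemize}
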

\begin{proof}
We estimate for $n \geq 2$, 
\[ F(Q) \geq \int_1 ^\infty e^{-\pi q^2 t} dt,\]
with $q^2$ the smallest diagonal element of $Q$.  Using a change of variables $u = \pi q^2 t$, this gives 
\[F(Q) \geq \int_1 ^\infty e^{-\pi q^2 t} dt = \frac{1}{\pi q^2} \int_{\pi q^2} ^\infty e^{-u} du = \frac{e^{-\pi q^2}}{\pi q^2} \to \infty \textrm{ as } q \to 0.\]
Consequently, by continuity of the map $q \mapsto F(Q)$, there exists $\epsilon >0$ such that the minimum of $F(Q)$ on $\mathcal Q$ is contained in 
\[ \left\{ q=(q_1, \ldots, q_n) \in \R^n : \prod_{j=1} ^n q_j = \pm 1, \quad |q_j| > \epsilon  \forall j \right\}. \]
Moreover, since the product over $q_j$ is equal to one, if any $q_j \to \pm \infty$, then necessarily one or more $q_k \to 0$.  Consequently,  possibly choosing a smaller $\epsilon > 0$, 
the minimum of $F(Q)$ on $\mathcal Q$ is contained in 
\[ \left\{ q=(q_1, \ldots, q_n) \in \R^n : \prod_{j=1} ^n q_j = \pm 1, \quad  \epsilon <|q_j| < \epsilon^{-1}, \quad   \forall j\right\}. \]
This is now a compact set, so by the continuity of $q \mapsto F(Q)$, this function achieves at least one minimum within this set. Since the determinant achieves its maximum precisely when $F(Q)$ achieves its minimum, this completes the proof.

\end{proof}

\section{The cubic torus uniquely maximizes the determinant} \label{s:3}
We will now prove the optimality of the cubic torus for the zeta regularized determinant.  We achieve this by first using Lagrange multipliers to identify a critical point in the variables $q_1, \ldots, q_n$.  Then, artfully re-arranging the expressions we obtain, we are able to reformulate everything in terms of certain products of one-dimensional theta functions.  Using monotonicity properties of these theta functions we prove that the critical point is unique.  Consequently, we conclude by Theorem \ref{maxtorus} that the critical point is the unique maximum.  
We once again work with the integral representation of the Epstein zeta function. 
Observe that
\[
F(Q) = \sum_{a \in \mathbb{Z}^n}' \int_1^\infty \left( e^{-\pi Q^{-1}[a]t} t^{-1}
+ e^{-\pi Q[a]t}\, t^{\,n/2 - 1} \right) dt
\]
is a smooth function with respect to \(q_1,\dots,q_n\) on \((0,\infty)^n\), and that
\[
\nabla\!\left(\prod_{j=1}^n q_j^2 - 1\right) \neq 0
\]
whenever $q_j>0$. We note that no generality is lost by assuming that the values $q_j$ are positive. Thus, by Theorem \ref{maxtorus}, a global minimum of $F(Q)$ on $\cQ$ exists, and any critical point must satisfy the Lagrange multiplier system of equations. If the solution of the Lagrange multiplier system is unique, then that point is the unique constrained global minimum of \(F(Q)\).  With this in mind we shall prove our main result.  
\begin{proof}[Proof of Theorem \ref{th:awesome_torus}]
We aim to show that the height 
of an orthogonal flat torus of unit volume is minimized at $q_1=\cdots=q_n=1$. To this end, we equivalently minimize $F(Q)$ with respect to 
\[
Q=\operatorname{diag}(q_1^2,\dots,q_n^2),
\]
under the constraint $|Q|=1$, that is
\beq
g(Q):=\left(\prod_{j=1}^n q_j^2\right) -1 = 0.
\label{eq:g_constraint} 
\eeq 
We define the Lagrangian
\[
\mathcal{L}(Q,\lambda)=F(Q)-\lambda g(Q).
\]
The minimization problem reduces to solving the system of $n+1$ equations
\begin{align*}
&\begin{cases}
\frac{\partial}{\partial q_j}\mathcal{L}(Q,\lambda)
= \frac{\partial}{\partial q_j}F(Q) - 2\lambda q_j \prod_{k\neq j} q^2_k = 0, \qquad j=1,\dots,n, \\[5pt]
\frac{\partial}{\partial \lambda}\mathcal{L}(Q,\lambda)
= g(Q)=\prod_{k=1}^n q_k^2 - 1 = 0.
\end{cases}\\[5pt]
\implies&
\begin{cases}
\left(2q_j\prod_{k\neq j} q^2_k\right)^{-1}\frac{\partial}{\partial q_j}F(Q)=\lambda,\qquad j=1,\dots,n, \\[5pt]
\prod_{k=1}^n q_k^2=1.
\end{cases}
\end{align*}
We aim to show that $Q=I$ is the unique solution to this system. Since the same multiplier $\lambda$ must arise from differentiation with respect to any coordinate $q_j$, it follows that the first set of $n$ equations must be independent of the index $j$. For convenience, we set
\[
\alpha_j(Q)=2q_j\prod_{k\neq j} q^2_k.
\]
We will prove that the only solutions to the first set of equations are scalar multiples of the identity, i.e.,
\begin{equation}
\alpha_j(Q)^{-1}\,\frac{\partial}{\partial q_j}F(Q)
=
\alpha_i(Q)^{-1}\,\frac{\partial}{\partial q_i}F(Q)
\quad \forall\, i,j
\quad \Longleftrightarrow \quad
Q=\mu I,\ \ \mu\in\mathbb{R}^+.
\end{equation}
The claim then follows immediately from the volume constraint. 
\medskip

First, we assume that $Q=\mu^2 I$ and show that this solves the Lagrangian system. We fix $\{i,j\}\subset\{1,\dots,n\}$ and compute:
\begin{align}
    \alpha_j(Q)^{-1}\,\frac{\partial}{\partial q_j}F(Q)
&=\frac 1 2  \mu^{-(2n-1)}\frac{\partial}{\partial q_j} F(Q)\nonumber\\
&=\pi\mu^{-(2n-1)}\sum_{a\in\mathbb{Z}^n}' a_j^2\int_1^\infty \left( \frac{1}{\mu^3} e^{-\pi Q^{-1}[a]t}-\mu e^{-\pi Q[a]t}t^{n/2}\  \right) dt.
\end{align}

Consider now the permutation matrix $P$ which interchanges the $i$-th and $j$-th coordinates. For $a=(a_1,\dots,a_j,\dots,a_i,\dots,a_n)$ set $b:=Pa=(a_1,\dots,a_i,\dots,a_j,\dots,a_n)$. The map $P:\mathbb{Z}^n\rightarrow\mathbb{Z}^n$ is a bijection on $\mathbb{Z}^n$. Since $Q=\mu^2 I$, we have $Q[a]=Q[b]$ and $a_i=b_j$. Thus, by a change of variables in the sum:
\begin{align}
\frac{1}{2}\mu^{-(2n-1)}\frac{\partial}{\partial q_j}F(Q)
&=\pi\mu^{-(2n-1)}\sum_{a\in\mathbb{Z}^n}'a_j^2\int_1^\infty \left(   \frac{1}{\mu^3}e^{-\pi Q^{-1}[a]t}-\mu e^{-\pi Q[a]t}t^{n/2} \ \right) dt\nonumber\\
&=\pi\mu^{-(2n-1)}\sum_{b\in{\mathbb{Z}^n}}' b_j^2\int_1^\infty \left(  \frac{1}{\mu^3}e^{-\pi Q^{-1}[b]t}-\mu e^{-\pi Q[b]t}t^{n/2}\ \right)  dt\nonumber\\
&=\pi\mu^{-(2n-1)}\sum_{a\in{\mathbb{Z}^n}}' a_i^2\int_1^\infty \left( \frac{1}{\mu^3}e^{-\pi Q^{-1}[a]t}-\mu e^{-\pi Q[a]t}t^{n/2}\ \right) dt\nonumber\\
&=\frac{1}{2}\mu^{-(2n-1)}\frac{\partial}{\partial q_i} F(Q). \nonumber
\end{align}
This shows that $Q=\mu^2 I$ solves the Lagrangian system.  Since we know that there exists a minimizer, if we prove that this is the only solution to the Lagrangian system, then this is the unique minimizer, and by the volume constraint $\mu =1$.

So, now we shall prove uniqueness of our solution to the Lagrangian system. Using the volume constraint \( \prod_{k=1}^n q^2_k = 1 \), we compute
\[
\frac{1}{\alpha_j(Q) q_j^3}
=\frac{1}{2q_j^4\prod_{k\neq j}q^2_k}
=\frac{1}{2q_j^2\prod_{k=1}^n q^2_k}
=\frac{1}{2q_j^2},
\]
and similarly
\[
\frac{q_j}{\alpha_j(Q)}
=\frac{q_j}{2q_j\prod_{k\neq j}q_k^2}
=\frac{q_j^2}{2}.
\]
The Lagrange multiplier condition on the derivatives takes the form
\[ \alpha_j(Q)^{-1}\,\frac{\partial}{\partial q_j}F(Q) = 
\pi\sum_{a\in\mathbb{Z}^n}'a_j^2\int_1^\infty \left(
\frac{1}{q_j^2}e^{-\pi Q^{-1}[a]t}
-
q_j^2e^{-\pi Q[a]t}t^{n/2} \right) 
\ dt
=\lambda.
\]
We now compare the identities corresponding to two distinct fixed indices $i$ and $j$:
\beq 
&\pi\sum_{a\in\mathbb{Z}^n}'a_j^2\int_1^\infty \left(  \frac{1}{q_j^2}e^{-\pi Q^{-1}[a]t} - q_j^2e^{-\pi Q[a]t}t^{n/2} \ \right) dt \nn \\ 
& =
\pi\sum_{a\in\mathbb{Z}^n}'a_i^2\int_1^\infty  \left(  \frac{1}{q_i^2}e^{-\pi Q ^{-1}[a]t} - q_i^2e^{-\pi Q[a]t}t^{n/2} \ \right) dt.
\label{eq11}
\eeq 
We simplify this slightly to 
\beq & & \sum_{a \in \Z^n} a_j^2 \int_1 ^\infty \left( \frac{1}{q_j^2} e^{-\pi Q^{-1} [a]t} - q_j^2 e^{-\pi Q[a]t} t^{n/2} \right) dt \nn \\ &=&   \sum_{a \in \Z^n} a_i ^2 \int_1 ^\infty \left( \frac{1}{q_i^2} e^{-\pi Q^{-1} [a]t} - q_i^2 e^{-\pi Q[a]t} t^{n/2} \right) dt. \label{eq:useconstraint} \eeq 
Here we have included $0$ in the sum because of the terms $a_j^2$ and $a_i^2$ which cause the term corresponding to $a=0$ to vanish anyhow. 

We now introduce the one-dimensional theta function and its derivative,
\[
\theta_{q_j}(t)=\sum_{a_j\in\mathbb{Z}}e^{-\pi q_j^2a_j^2 t},
\qquad
\theta_{q_j}'(t)
=
-\pi q_j^2\sum_{a_j\in\mathbb{Z}} a_j^2 e^{-\pi a_j^2q_j^2 t}.
\]
We use this to show that the sums over $\mathbb{Z}^n$ can be written as products of one-dimensional series. More precisely, we have
\[
\sum_{a\in\mathbb{Z}^n}a_j^2e^{-\pi Q^{-1}[a]t}
=
\sum_{a_j\in\mathbb{Z}}a_j^2e^{-\pi \frac{a_j^2}{q_j^2}t}
\cdot
\sum_{a_i\in\mathbb{Z}}e^{-\pi \frac{a_i^2}{q_i^2}t}
\dots
\sum_{a_n\in\mathbb{Z}}e^{-\pi \frac{a_n^2}{q_n^2}t},
\]
which can be rewritten in terms of the one-dimensional theta function as
\[
\sum_{a\in\mathbb{Z}^n}a_j^2e^{-\pi Q^{-1}[a]t} = -\frac{1}{\pi}q_j^2\theta'_{1/q_j}(t)\prod_{k\neq j}\theta_{1/q_k}(t).
\]
 Similarly 
\[ \sum_{a \in \Z^n} a_i ^2  e^{-\pi Q^{-1} [a] t} = - \frac 1 \pi q_i^2 \theta_{1/q_i} '(t) \prod_{k \neq i} \theta_{1/q_k} (t), \] 
\[ \sum_{a \in \Z^n} a_j ^2 e^{-\pi Q[a]t} = - \frac{1}{\pi q_j^2} \theta_{q_j} '(t) \prod_{k \neq j} \theta_{q_k} (t), \] 
\[ \sum_{a \in \Z^n} a_i ^2 e^{-\pi Q[a]t} = - \frac{1}{\pi q_i^2} \theta_{q_i} '(t) \prod_{k \neq i} \theta_{q_k} (t). \] 
We use this in \eqref{eq:useconstraint} 
\[ - \frac 1 \pi  \int_1 ^\infty  \theta_{1/q_j} '(t) \prod_{k \neq j} \theta_{1/q_k} (t) dt + \frac 1 \pi \int_1 ^\infty \theta_{q_j} '(t) \prod_{k \neq j} \theta_{q_k} (t) t^{n/2} dt \] 
\[= - \frac 1 \pi  \int_1 ^\infty  \theta_{1/q_i} '(t) \prod_{k \neq i} \theta_{1/q_k} (t) dt + \frac 1 \pi \int_1 ^\infty  \theta_{q_i} '(t) \prod_{k \neq i} \theta_{q_k} (t) t^{n/2} dt \] 
 \beq \iff \int_1 ^\infty \left[ \left( \frac{\theta_{1/q_i} '(t)}{\theta_{1/q_i} (t)} - \frac{\theta_{1/q_j} '(t)}{\theta_{1/q_j} (t)} \right) \prod_{k=1} ^n \theta_{1/q_k} (t) + \left( \frac{\theta_{q_j} '(t)}{\theta_{q_j} (t)} - \frac{\theta_{q_i} '(t)}{\theta_{q_i} (t)}\right) \prod_{k=1} ^n \theta_{q_k} (t) t^{n/2} \right] dt = 0.  \label{eq:almostdone} \eeq 

We therefore study the monotonicity properties of the function
\[
q \longmapsto \frac{\theta'_{q}(t)}{\theta_{q}(t)},
\]
for a fixed $t \geq 1$. Recall that the one-dimensional theta function can be written as
\[
\theta(t)= \theta_1(t) =\sum_{a\in\mathbb{Z}}e^{-\pi a^2 t}.
\]
By definition, $\theta_q(t)=\theta(q^2 t)$. We define the variable $u=tq^2$, and we can therefore write
\[
\frac{\theta'_q(t)}{\theta_q(t)} = q^2\frac{\theta'(u)}{\theta(u)}=\frac{1}{t}u\frac{\theta'(u)}{\theta(u)}.
\]
According to \cite[Prop. 3.13]{faulhuber}, the function
\[
u\frac{\theta'(u)}{\theta(u)}
\]
is strictly increasing in $u$ for all $u>0$. Since $u$ is a strictly increasing function of $q$, and the composition of strictly increasing functions is still strictly increasing, the function
\[
q \longmapsto \frac{\theta'_q(t)}{\theta_q(t)}
\]
is strictly increasing in $q$ for every fixed $t\geq1$.

Consequently, if $q_j > q_i$, then for all $t \geq 1$ we have 
 \[  \frac{\theta_{1/q_i} '(t)}{\theta_{1/q_i} (t)} - \frac{\theta_{1/q_j} '(t)}{\theta_{1/q_j} (t)} > 0, \quad \frac{\theta_{q_j} '(t)}{\theta_{q_j} (t)} - \frac{\theta_{q_i} '(t)}{\theta_{q_i} (t)} > 0. \] 
 This would make the integrand in \eqref{eq:almostdone} strictly positive, a contradiction to the fact that it must vanish.  Thus we must have $q_j \leq q_i$.  By possibly switching names, we may in fact conclude that $q_j=q_i$, and this must hold for all pairs of indices $i,j$.  Thus, together with the volume constraint, we must have $q_j=1$ for all $j$.

\end{proof}

\section{The zeta regularized determinant of a Euclidean box} \label{s:4}

In this section, we consider the box domain
\[
\mathcal{B} = [0,q_1]\times \dots \times [0,q_n], \qquad q_j>0,\quad j=1,\dots,n.
\]
A key difference with respect to the torus is the presence of a boundary. On the torus, no boundary conditions are imposed, and the full translation symmetry is preserved. On the box $\mathcal{B}$, instead, boundary conditions break these symmetries and restrict the admissible eigenfunctions of the Laplacian. Nevertheless, the spectrum on the box can be described in terms of the spectrum on a suitable torus. More precisely, consider the orthogonal lattice $\Gamma$ with basis
\[
v_j = q_j \mathbf{e}_j, \qquad j=1,\dots,n,
\]
and define the associated torus
\[
\mathcal{T} = \mathbb{R}^n / \Gamma.
\]

The eigenfunctions of the Laplacian on $\mathcal{T}$ are given by complex exponentials and form a complete basis of $L^2(\mathcal{T})$. The key observation is that eigenfunctions on the box $\mathcal{B}$ can be obtained by restricting torus eigenfunctions that satisfy suitable symmetry conditions with respect to reflections at the boundary. More precisely, each eigenfunction on $\mathcal{B}$ corresponds to a linear combination of torus eigenfunctions with prescribed parity (even or odd) in each coordinate direction. The type of parity is determined by the boundary conditions: Dirichlet boundary conditions correspond to functions that are odd under reflection, while Neumann boundary conditions correspond to functions that are even. Therefore, the spectrum of the Laplacian on the box can be identified with a subset of the torus spectrum obtained by imposing these symmetry constraints. This perspective allows us to relate the corresponding spectral zeta functions and transfer results from the torus setting to the box. In particular, it will allow us to apply Theorem \ref{th:awesome_torus} to obtain the analogous extremal result for the $n$-dimensional box, a problem which is closely related to, and in low dimensions equivalent to, the torus case.

\begin{prop} \label{prop:box}
Let $q_1,\dots,q_n$ be positive real numbers. Let \( Q = \mathrm{diag}(q_1^2,\dots,q_n^2) \) and \( \zeta_{\Box}(s,Q), \) $\zeta_{\diamondsuit}(s,Q)$ denote the spectral zeta function of the Laplace operator on the corresponding box domain
\[
\mathcal{B} = [0,q_1]\times \dots \times [0,q_n],\]
with Dirichlet and Neumann boundary conditions,  respectively.  For every subset \( J \subseteq \{1,\dots,n\} \) with \( |J| = m \), let \( Q_J \) be the  \( m \times m \) diagonal sub-matrix obtained by restricting \( Q \) to the indices in \( J \), and let $Q_J^{-1}$ be its inverse matrix.  Let $\zeta_\circ (s; Q_J)$ denote the spectral zeta function of the Laplace operator for the orthogonal torus associated with the indices in $J$, namely $\R^m/(\sqrt{Q_J})\Z^m$.
With Dirichlet boundary condition, the following identities hold:

\begin{align*}
    \zeta_{\Box}(s,Q)
=&
\pi^{-2s}\left[2^{-n}
\sum_{m=0}^{n} (-1)^{\,n-m}
\sum_{\substack{J \subseteq \{1,\dots,n\} \\ |J| = m}}
\zeta_E(s, Q^{-1}_J)\right]\\=&4^{s}\left[2^{-n}
\sum_{m=0}^{n} (-1)^{\,n-m}
\sum_{\substack{J \subseteq \{1,\dots,n\} \\ |J| = m}}
\zeta_\circ(s, Q_J)\right],
\end{align*}
\[ \zeta_{\Box}(0, Q) = \left( - \frac 1 2 \right)^n,\]
and
\begin{equation}
\zeta'_{\Box}(0, Q)= -2\log(\pi)\left(-\frac{1}{2}\right)^n + 2^{-n}
\sum_{m=0}^{n} (-1)^{\,n-m}
\sum_{\substack{J \subseteq \{1,\dots,n\} \\ |J| = m}}
\zeta'_E(0, Q_J^{-1}).
\label{eq5}
\end{equation}
With Neumann boundary condition, the following identities hold:
\begin{align}
    \zeta_{\diamondsuit}(s, Q) =& \pi^{-2s} \left[ 2^{-n} \sum_{K \subseteq \{1,\dots,n\}} \zeta_E(s, Q_K^{-1}) \right]\nonumber\\
    =&4^{s} \left[ 2^{-n} \sum_{K \subseteq \{1,\dots,n\}} \zeta_\circ(s, Q_K) \right],\nonumber
\end{align}
\[ \zeta_\diamondsuit(0, Q) = \left(\frac{1}{2}\right)^n-1,\]
and
\begin{equation} \label{eq:neumann0}
    \zeta'_{\diamondsuit}(0, Q) = -2\log(\pi)\left[\left(\frac{1}{2}\right)^n-1\right]+ 2^{-n} \sum_{K \subseteq \{1,\dots,n\}} \zeta'_E(0, Q_K^{-1}).
\end{equation}
\end{prop}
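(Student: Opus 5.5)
The plan is to compute the spectra explicitly and reorganise the zeta functions via inclusion--exclusion over which coordinates vanish. With Dirichlet conditions on $\mathcal B=[0,q_1]\times\cdots\times[0,q_n]$, the eigenvalues are
\[
\pi^2\sum_{j=1}^n \frac{k_j^2}{q_j^2}, \qquad k_j\in\{1,2,\dots\}.
\]
With Neumann conditions the eigenvalues have the same form but with $k_j\in\{0,1,2,\dots\}$, and the zero vector is excluded since we only sum over positive eigenvalues.

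For the Dirichlet case I will use the symmetry of the summand under $k_j\mapsto -k_j$ to write
\[
\sum_{k\in\mathbb N^n} f(k)=2^{-n}\sum_{k\in(\Z\setminus\{0\})^n} f(k),
\]
where $f(k)=(\pi^2 Q^{-1}[k])^{-s}$. Next I write the indicator of $(\Z\setminus\{0\})^n$ as $\prod_j(1-\mathbf 1_{k_j=0})$ and expand the product. The term indexed by the set $J$ of coordinates allowed to be nonzero (the complement being forced to zero) comes with sign $(-1)^{n-|J|}$. It gives the sum over $\Z^J$ of $(\pi^2 Q_J^{-1}[k])^{-s}$, which is $\pi^{-2s}\zeta_E(s,Q_J^{-1})$. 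Here I adopt the convention $\zeta_E(s,Q_\emptyset^{-1})=0$, or the empty set contributes only the excluded zero vector, which must be handled consistently with the primes in the sums.

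There is one point of care. In the full sum over $\Z^J$, the $k=0$ term is excluded by the prime, and summing over all $J$ with signs cancels these contributions consistently, because every $k$ that has a zero coordinate is cancelled. All manipulations are done first for $\operatorname{Re}s>n/2$, where everything converges absolutely, and then extended by meromorphic continuation. The second form of the identity follows from \eqref{eq:epsteinz}: $\pi^{-2s}\zeta_E(s,Q_J^{-1})=\pi^{-2s}(2\pi)^{2s}\zeta_\circ(s,Q_J)=4^s\zeta_\circ(s,Q_J)$.

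For the Neumann case the same argument applies with $\prod_j(1+\mathbf 1_{k_j=0})$. Summing over $\mathbb N_0^n\setminus\{0\}$ and unfolding to $\Z^n$, a vector with exactly $|K|$ nonzero coordinates is counted $2^{|K|}$ times. Hence
\[
\sum_{\mathbb N_0^n\setminus 0} f=\sum_{K\neq\emptyset}2^{-|K|}\sideset{}{'}\sum_{\Z^K} f\cdot(\text{nonzero coordinates only}).
\]
I regroup this by inclusion--exclusion, or more directly by noting that $2^{-n}\sum_K\sideset{}{'}\sum_{k\in\Z^K}$ counts each vector with support $S$ with weight $2^{-n}\cdot 2^{n-|S|}=2^{-|S|}$, since $K$ ranges over supersets of $S$. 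This gives exactly the stated formula.

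For the values at $0$, I will use \eqref{eq:zetaE(0)}, namely $\zeta_E(0,Q_J^{-1})=-1$ for every nonempty $J$. Note that this was derived under the condition $|Q|=1$. I expect this to be the main subtlety, and I would resolve it by a scaling argument: $\zeta_E(s,cQ)=c^{-s}\zeta_E(s,Q)$, so the value at $0$ is independent of the volume. For the empty set I take the value $0$. In the Dirichlet case,
\[
\zeta_\Box(0)=2^{-n}\sum_{m\ge1}\binom nm(-1)^{n-m}(-1)=-2^{-n}\bigl(0-(-1)^n\bigr)=(-1/2)^n.
\]
In the Neumann case, $2^{-n}(2^n-1)(-1)=2^{-n}-1$. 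Finally, differentiating $\pi^{-2s}[\cdots]$ at $s=0$ gives $-2\log\pi$ times the value at $0$, plus the bracket of derivatives, which yields \eqref{eq5} and \eqref{eq:neumann0}.
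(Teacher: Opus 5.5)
Your proposal is correct. For the Dirichlet identity you follow the paper: unfold to vectors with no vanishing coordinate using the sign symmetry, expand $\prod_j(1-\mathbf{1}_{k_j=0})$, and adopt the convention $\zeta_E(s,Q_\emptyset^{-1})=0$.

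For the Neumann identity your route differs. The paper decomposes $\mathbb{N}^n\setminus\{0\}$ by support $J$ and applies the Dirichlet formula on each face. It then exchanges the order of summation and evaluates, by the binomial theorem, the coefficient $\sum_{K\subseteq J\subseteq\{1,\dots,n\}}2^{-|J|}(-1)^{|J|-|K|}=2^{-n}$. You instead observe directly that both sides give weight $2^{-|S|}$ to a lattice vector with support $S$; equivalently, you expand $2^{-n}\prod_j(1+\mathbf{1}_{k_j=0})$. This handles both boundary conditions by the same mechanism with opposite signs, and no binomial computation is needed. The paper's version has the modest advantage of showing the Neumann zeta function explicitly as an assembly of Dirichlet zeta functions of the faces.

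Keep your scaling remark $\zeta_E(s,cQ)=c^{-s}\zeta_E(s,Q)$. The paper applies \eqref{eq:zetaE(0)} to the sub-forms $Q_J^{-1}$, whose determinants are generally not $1$, without comment, although that value was derived only under $|Q|=1$. Your observation, together with holomorphy of $\zeta_E(s,Q_J^{-1})$ at $s=0$ (the only pole is at $s=|J|/2$), justifies that step.
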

\begin{proof}

To prove the Dirichlet case consider

\[
\zeta_{\Box}(s, Q)=\pi^{-2s}\sum_{k\in\mathbb{N}_+^n} Q^{-1}[k]^{-s},
\qquad
\zeta_E(s, Q^{-1})=\sum_{k\in\mathbb{Z}^n\setminus\{0\}} Q^{-1}[k]^{-s}.
\]

We define the set
\[
\mathcal{S} := \{k\in\mathbb{Z}^n \mid k_i \neq 0 \ \forall i,\ i=1,\dots,n\}
\qquad\text{and}\qquad
\mathcal{Z} := \mathbb{Z}^n \setminus \{0\}.
\]

Since $Q^{-1}[k]$ depends only on the squares $k_i^2$, it is invariant under any reflection $k_i\mapsto -k_i$. Since there are $2^n$ sign combinations for a vector with no zero coordinates, we have
\[
\sum_{k\in\mathbb{N}_+^n} Q^{-1}[k]^{-s}
= 2^{-n} \sum_{k\in \mathcal{S}} Q^{-1}[k]^{-s}.
\]
We now express $\mathcal{S}$ via inclusion–exclusion. For $i=1,\dots,n$ define the boundary terms of $\mathcal Z$
\[
\mathcal{A}_i := \{k\in \mathcal{Z}\ |\  k_i = 0\}.
\]
Then $\mathcal{S}$ is the complement of their union
\[
\mathcal{S} = \mathcal{Z} \setminus \bigcup_{i=1}^n \mathcal{A}_i.
\]
We can express the indicator function on $\mathcal{S}$
\[
\mathbb{I}_{\mathcal{S}}(k) = \prod_{i=1}^n \left(1 - \mathbb{I}_{\mathcal{A}_i}(k)\right) = \sum_{L \subseteq \{1,\dots,n\}} (-1)^{|L|} \mathbb{I}_{\mathcal{A}_L}(k),
\]
where
\[\mathcal{A}_L := \bigcap_{i\in L} \mathcal{A}_i = \{k\in \mathcal{Z} \mid k_i = 0 \ \forall i\in L\}.\]
Multiplying by $Q^{-1}[k]^{-s}$ and summing over all $k \in \mathcal{Z}$, we obtain
\[
\sum_{k\in \mathcal{S}} Q^{-1} [k]^{-s} = \sum_{k\in \mathcal{Z}} \mathbb{I}_{\mathcal{S}}(k) Q^{-1}[k]^{-s} = \sum_{L\subseteq \{1,\dots,n\}} (-1)^{|L|} \sum_{k\in \mathcal{A}_L} Q^{-1} [k]^{-s}.
\]
If we let $J := L^c$, taking the complement within the set $\{1, \ldots, n\}$, the condition $k \in \mathcal{A}_L$ forces all coordinates indexed by $L$ to vanish. The sum over $\mathcal{A}_L$ thus reduces to a sum over the non-zero coordinates indexed by $J$, yielding
\[
\sum_{k\in \mathcal{A}_L} Q^{-1}[k]^{-s} = \zeta_E(s, Q_J^{-1}),
\]
with the convention \[\zeta_E(s, Q_\emptyset)=0.\] Since $|L| = n - |J|$, substituting this back into the sum gives
\[
\sum_{k\in \mathcal{S}} Q^{-1}[k]^{-s} = \sum_{J\subseteq \{1,\dots,n\}} (-1)^{n-|J|} \, \zeta_E(s,Q_J^{-1}).
\]
Finally,
\begin{align*}
    \zeta_{\Box}(s,Q)
&=\pi^{-2s} \left[2^{-n} \sum_{J\subseteq \{1,\dots,n\}} (-1)^{n-|J|} \, \zeta_E(s,Q_J^{-1})\right]
\\&= \pi^{-2s} \left[ 2^{-n} \sum_{m=0}^n (-1)^{n-m}
\sum_{\substack{J\subseteq \{1,\dots,n\} \\ |J|=m}}
\zeta_E(s, Q_J^{-1}).\right]
\end{align*}
Evaluating at $s=0$ using \eqref{eq:zetaE(0)}, we have 
\[ \zeta_{\Box}(0, Q) = 2^{-n} \sum_{m=1} ^n (-1)^{n-m} \sum_{\substack{J\subseteq \{1,\dots,n\} \\ |J|=m}} (-1) = \left( - \frac 1 2 \right)^n \sum_{m=1} ^n (-1)^m {n \choose m} = \left( - \frac 1 2 \right)^n.\] 
Here we have used that 
\[ -\sum_{\substack{J \subseteq \{1,\dots,n\} \\ |J|=m}} 1
= -\binom{n}{m}.
\]
since we are basically counting how many set of dimension $m$ there are in an $n$ dimensional one, which is exactly the binomial coefficient.  We have also used the standard 
identity 
\beq 
\sum_{m=1}^n (-1)^m \binom{n}{m} = -1. \label{eq:standardid}
\eeq

Taking the derivative
\begin{equation}
    \begin{aligned}
        \zeta_{\Box}'(s,Q) 
        =& -2\pi^{-2s}\log(2\pi) \left[2^{-n}\sum_{m=0}^n(-1)^{n-m}\left( \sum_{\substack{J \subseteq \{1,\dots,n\} \\ |J|=m}}\zeta_E(s,Q_{J}^{-1})\right)\right] \\
        & + \pi^{-2s} \left[2^{-n}\sum_{m=0}^n(-1)^{n-m}\left( \sum_{\substack{J \subseteq \{1,\dots,n\} \\ |J|=m}}\zeta'_E(s,Q_J^{-1})\right)\right]
\end{aligned}   
\end{equation}
and evaluating
\begin{equation}
\begin{aligned}
\zeta_{\Box}'(s,Q)\big|_{s=0}
    &= \zeta_{\Box}'(0,Q) \\
    &= -2\log(\pi)\left[2^{-n}\sum_{m=0}^n(-1)^{n-m}\left( \sum_{\substack{J \subseteq \{1,\dots,n\} \\ |J|=m}}\zeta_E(0,Q_J^{-1})\right)\right] \\
    &+ \left[2^{-n}\sum_{m=0}^n(-1)^{n-m}\left( \sum_{\substack{J \subseteq \{1,\dots,n\} \\ |J|=m}}\zeta'_E(0,Q_J^{-1})\right)\right].
    \label{eq3}
\end{aligned}
\end{equation}
Moreover, substituting $\zeta_E(0,Q_\emptyset)=0$ when $m=|J|=0$ and $\zeta_{E}(0,Q_J^{-1})=-1$ 
for any non-empty $J$ 
\[
\sum_{\substack{J \subseteq \{1,\dots,n\} \\ |J|=m}} 
\zeta_{E}(0;Q_J)
= -\sum_{\substack{J \subseteq \{1,\dots,n\} \\ |J|=m}} 1
= -\binom{n}{m}.
\]

Thus,
\[
2^{-n}\sum_{m=1}^n (-1)^{n-m}\left(-\binom{n}{m}\right)
= -2^{-n}\sum_{m=1}^n (-1)^{n-m}\binom{n}{m}.
\]
Since \((-1)^{n-m} = (-1)^n(-1)^m\),
\[
-2^{-n}\sum_{m=1}^n (-1)^{n-m}\binom{n}{m}
=-\left(-\frac{1}{2}\right)^n\sum_{m=1}^n (-1)^m\binom{n}{m}.
\]
We thus obtain using \eqref{eq:standardid}
\begin{equation}
    -2\log(\pi)\left[2^{-n}\sum_{m=0}^n(-1)^{n-m}\left( \sum_{\substack{J \subseteq \{1,\dots,n\} \\ |J|=m}}\zeta_E(0,Q_J^{-1})\right)\right]=-2\log\pi\left(-\frac{1}{2}\right)^{n}.
\end{equation}
which proves equation \eqref{eq5}. To prove the Neumann case we start from
\[
\zeta_{\diamondsuit}(Q,s)=\pi^{-2s}\sum_{k\in\mathbb{N}\setminus\{0\}} Q^{-1}[k]^{-s},\]
and we decompose the summation set
$$\mathbb{N}^n \setminus \{0\} = \bigcup_{J \subseteq \{1,\dots,n\}} \{ k \in \mathbb{N}^n \mid k_i > 0 \text{ if } i \in J \text{ and } k_i = 0 \text{ if } i \notin J \}.$$
Then
\[\sum_{k\in\mathbb{N}\pink{^n}\setminus\{0\}} Q^{-1}[k]^{-s}=\sum_{J \subseteq \{1,\dots,n\}} \left( \sum_{k \in \mathbb{N}_+^{|J|}} Q_J^{-1}[k]^{-s} \right)\]
From the Dirichlet case we know that
\[\sum_{k \in \mathbb{N}_+^{|J|}} Q_J^{-1}[k]^{-s} = 2^{-|J|} \sum_{K \subseteq J} (-1)^{|J|-|K|} \zeta_E(s, Q_K^{-1}).\]
Substituting we find
\begin{align*}
\zeta_{\diamondsuit}(s, Q) = \pi^{-2s} \sum_{J \subseteq \{1,\dots,n\}} 2^{-|J|} \sum_{K \subseteq J} (-1)^{|J|-|K|} \zeta_E(s, Q_K^{-1})
\end{align*}
We can exchange the summation order. We first fix a $K$, and then we sum on all subsets $J$ such that $K\subseteq J\subseteq\{1,\dots,n\}.$ In this way we can isolate the coefficient and the sum involving the zeta
\[\zeta_{\diamondsuit}(s, Q) = \pi^{-2s}\sum_{K \subseteq \{1,\dots,n\}} \zeta_E(s, Q_K^{-1}) \left( \sum_{K \subseteq J \subseteq \{1,\dots,n\}} 2^{-|J|} (-1)^{|J|-|K|} \right).\]
Finally we compute the coefficient for any given $K$. Let $|K|=h$ and $r=|J|-h.$
For a fixed $K$, there are \[\binom{n-h}{r}\] possible choices of $J$ with $|J|=m$.
\[\sum_{K \subseteq J \subseteq \{1,\dots,n\}} 2^{-|J|} (-1)^{|J|-|K|} =\]
\[= 2^{-h} \sum_{r=0}^{n-h} \binom{n-h}{r} \left(-\frac{1}{2}\right)^r = 2^{-h} \left(1 - \frac{1}{2}\right)^{n-h} = 2^{-h} \left(\frac{1}{2}\right)^{n-h} = 2^{-n}\]
Therefore the coefficient is constant and independent of $K$. We substitute
\[\zeta_{\diamondsuit}(s, Q) = \pi^{-2s} \left[ 2^{-n} \sum_{K \subseteq \{1,\dots,n\}} \zeta_E(s, Q_K^{-1}) \right].\]
Since $\zeta(0,Q_{\emptyset})=0$, we therefore compute 
\[ \zeta_\diamondsuit (0, Q) = 2^{-n} \sum_{\substack{K \subseteq \{1,\dots,n\}\\K\neq\{\emptyset\}}} (-1) = 2^{-n}(1-2^n)=\left(\frac{1}{2}\right)^n-1. \]
Taking the derivative at zero
\[\zeta'_{\diamondsuit}(0, Q) = -2\log(\pi)(2^{-n}-1)+ 2^{-n} \sum_{K \subseteq \{1,\dots,n\}} \zeta'_E(0, Q_K^{-1}).\]
\end{proof}

\begin{remark} \label{rem:heattrace}
The coefficient of $t^0$ in the short time asymptotic expansion of the heat trace is precisely the value of the spectral zeta function at $s=0$ minus the contribution given by the the projection onto the kernel of the Laplacian, as can be shown using \eqref{eq:zeta_heat}.  Consequently, in the Dirichlet case, 
\[
\zeta_\Box(0,Q)=\left(-\frac{1}{2}\right)^n,
\]
which is independent of $Q$ and depends only on the dimension $n$. In particular, it alternates in sign as $n$ varies. In the Neumann case, 
\[\zeta_{\diamondsuit}(0, Q) =\left(\frac{1}{2}\right)^n-1.\] 
\end{remark} 

\subsection{The two-dimensional case} \label{ss:twodim}
In dimension $2$ we have $Q = \operatorname{diag}(q_1^2,q_2^2)$, and equation \eqref{eq5} reads
\[
\zeta'_{\Box}(0,Q)
= -\frac{1}{2}\log\pi
+ \frac{1}{4}\left[
-\zeta'_E(0,q_1^{-2})
-\zeta'_E(0,q_2^{-2})
+\zeta'_E(0,Q^{-1})
\right].
\]
In the one-dimensional case, the derivative at zero of the Epstein zeta function can be computed explicitly in terms of the Riemann zeta function, yielding \begin{align}
\zeta'_E(0,q_1^{-2}) &= -\log(q_1^2) - 2\log(2\pi), \nonumber\\
\zeta'_E(0,q_2^{-2}) &= -\log(q_2^2) - 2\log(2\pi). \nonumber
\end{align}
Imposing the volume constraint $q_1 q_2 = 1$, i.e.\ $q_2 = 1/q_1$, we obtain
\begin{align}
\zeta'_{\Box}(0,Q)
&= -\frac{1}{2}\log\pi
+ \frac{1}{4}\Big[
\log(q_1^2) + 2\log(2\pi)
+\log(q_2^2) + 2\log(2\pi)
+\zeta'_E(0,Q^{-1})
\Big] \nonumber\\
&= -\frac{1}{2}\log\pi
+ \frac{1}{4}\Big[
\log(q_1^2) + \log(q_2^2)
+ 4\log(2\pi)
+\zeta'_E(0,Q^{-1})
\Big] \nonumber\\
&= -\frac{1}{2}\log\pi
+ \frac{1}{4}\Big[
4\log(2\pi)
+\zeta'_E(0,Q^{-1})
\Big]. \nonumber
\end{align}
This shows that, under the fixed-volume condition, the variational problem reduces to the minimization of $\zeta'_E(0,Q^{-1})$. In particular, this establishes the equivalence between the results obtained by Rowlett and Aldana in \cite[Theorem 5]{juliealdana} and by Faulhuber in \cite[Theorem 1.1]{faulhuber}.

\subsection{The Neumann case} \label{s:neumann} 
For a box with the Neumann boundary condition, by \eqref{eq:neumann0}, we can relate the height to that of orthogonal tori of lower dimensions.  However, it is important to note that these lower dimensional tori no longer satisfy the volume constraint; their $m$-dimensional volumes are unconstrained.  So, we must proceed carefully, but serendipitously, we obtain the analogous result for the determinant on boxes with the Neumann boundary condition.  

\begin{thm} \label{th:awesome_neumann}
     For each $n \geq 2$, among all orthogonal $n$-dimensional box domains with volume equal to one, the zeta regularized determinant of the Laplacian with the Neumann boundary condition is uniquely maximized by the equilateral box with side lengths equal to one. 
     \end{thm}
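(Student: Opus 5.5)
The plan is to reduce the Neumann problem, via \eqref{eq:neumann0}, to a sum of torus heights, and then apply Theorem \ref{th:awesome_torus} to each summand. Since the determinant is $e^{-\zeta'_{\diamondsuit}(0,Q)}$, maximizing it means minimizing $\zeta'_{\diamondsuit}(0,Q)$. In \eqref{eq:neumann0} the $\log\pi$ term depends only on $n$, so it suffices to minimize
\[
\Phi(q_1,\dots,q_n):=\sum_{\emptyset\neq K\subseteq\{1,\dots,n\}} \zeta'_E(0,Q_K^{-1})
\]
subject to $\prod_j q_j=1$. The difficulty is that the sub-tori indexed by $K$ do not have unit volume, so Theorem \ref{th:awesome_torus} does not apply to them directly.

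\textbf{Step 1: split off the volume of each sub-torus.} For $c>0$ we have $\zeta_E(s,cA)=c^{-s}\zeta_E(s,A)$. Differentiating at $s=0$ and using \eqref{eq:zetaE(0)} gives
\[
\zeta'_E(0,cA)=\log c+\zeta'_E(0,A).
\]
For $|K|=m$ write $V_K=\prod_{j\in K}q_j$ and $\widetilde Q_K=V_K^{-2/m}Q_K$, which has unit determinant. Then $Q_K^{-1}=V_K^{-2/m}\widetilde Q_K^{-1}$, and therefore
\[
\zeta'_E(0,Q_K^{-1})=-\tfrac{2}{m}\log V_K+\zeta'_E(0,\widetilde Q_K^{-1}).
\]

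\textbf{Step 2: the volume terms cancel.} Fix $m$ and sum over $|K|=m$. Each index $j$ belongs to exactly $\binom{n-1}{m-1}$ such sets, so
\[
\sum_{|K|=m}\log V_K=\binom{n-1}{m-1}\log\prod_{j=1}^n q_j=0
\]
by the volume constraint. Hence
\[
\Phi=\sum_{\emptyset\neq K}\zeta'_E\bigl(0,\widetilde Q_K^{-1}\bigr),
\]
a sum of heights of \emph{unit-volume} orthogonal tori. I expect this cancellation to be the crux of the argument; I would double-check the sign conventions and \eqref{eq:zetaE(0)} carefully here.

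\textbf{Step 3: minimize each summand separately.} For $|K|=1$ the normalized one-dimensional torus is fixed, so the term is a constant. For $|K|=m\ge 2$, the relation \eqref{eq:zetacirc_zetae} together with Theorem \ref{th:awesome_torus} in dimension $m$ shows that $\zeta'_E(0,\widetilde Q_K^{-1})$ is minimized exactly when the normalized torus is the cubic one, that is, when all $q_j$ with $j\in K$ are equal.

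\textbf{Step 4: conclude.} The choice $q_1=\dots=q_n=1$ minimizes every summand simultaneously, so it minimizes $\Phi$. For uniqueness, take $K=\{1,\dots,n\}$: a minimizer of $\Phi$ must minimize this term, and by the uniqueness in Theorem \ref{th:awesome_torus} that forces all $q_j$ to be equal, hence equal to $1$ by the volume constraint. So the unit cube is the unique maximizer of the Neumann determinant.
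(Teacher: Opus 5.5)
Your proposal is correct and follows the paper's proof essentially step for step. You normalize each sub-box to unit volume, show that the resulting logarithmic volume terms cancel under the constraint, and then apply Theorem~\ref{th:awesome_torus} termwise. The cancellation is organized slightly differently: you cancel level by level in $|K|=m$ using $\binom{n-1}{m-1}$, while the paper computes the total exponent $(2^n-1)/n$ of each $q_i$ in $\prod_J c_J$. Your Step 4 also spells out the uniqueness argument through the $K=\{1,\dots,n\}$ term, which the paper leaves implicit.
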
 
\begin{proof} 
We have the height of a box domain with the Neumann boundary condition, according to Proposition \ref{prop:box} is \[ 
    \zeta'_{\diamondsuit}(0, Q) = 2\log(\pi)(2^{-n}-1)+ 2^{-n} \sum_{J \subseteq \{1,\dots,n\}} \zeta'_E(0, Q_J^{-1}).
    \]  
Thus for the box domain
\[ \prod_{i=1} ^n [0, q_i],\]
we shall analyze zeta functions arising from sub-boxes of the form 
\[ \prod_{i \in J} [0, q_i].\]
We therefore define for $J \neq \emptyset$
\[ c_J = \left| \prod_{i \in J} q_i \right|^{-\frac{1}{|J|}}.\]
Then the box domain scaled by $c_J$ has volume one, and we define 
\[ \zeta_E (s, \tilde Q_J^{-1}) = c_J ^{-2s} \zeta_E (s, Q_J^{-1}).\]
Then we have 
\[ \zeta_E'(0, Q_J^{-1}) = 2 \log(c_J) \zeta_E(0, \tilde Q_J^{-1}) + \zeta_E '(0, \tilde Q_J^{-1}).\]
By \eqref{eq:zetaE(0)}, this becomes  
\[ \zeta_E'(0, Q_J^{-1}) = -2 \log(c_J) + \zeta_E'(0, \tilde Q_J^{-1}).\]
Consequently 
\beq \zeta_\diamondsuit'(0, Q) &=& -2 \log (\pi)(2^{-n}-1) + 2^{-n} \sum_{J \subseteq \{1, \ldots, n\}} \left( -2 \log(c_J) + \zeta_E'(0, \tilde Q_J^{-1}) \right) \nn \\ 
&=& 2 \log (\pi) + 2^{-n}  \left[ -2 \log\left( \prod_{J \subseteq \{1, \ldots, n\} } c_J \right) + \sum_{J \subseteq \{1, \ldots, n\}}\zeta_E'(0, \tilde Q_J^{-1}) \right] . \label{eq:ohnooo} 
\eeq 
Consider 
\[
\prod_{\varnothing \neq J} c_J
=
\prod_{\varnothing \neq J}
\prod_{i \in J} |q_i|^{-1/|J|}.
\]
For any fixed $i$, the total exponent of $q_i$ is 
\[
-\sum_{J \ni i} \frac{1}{|J|}.
\]
All the subsets $J$ containing $i$ of cardinality $k$ are 
\[
\binom{n-1}{k-1}.
\]
Thus
\[
\sum_{J \ni i} \frac{1}{|J|}
=
\sum_{k=1}^{n} \frac{1}{k} \binom{n-1}{k-1}.
\]
and using the standard identity 
\[
\frac{1}{k}\binom{n-1}{k-1}
=
\frac{1}{n}\binom{n}{k},
\]
leads to 
\[
\sum_{J \ni i} \frac{1}{|J|}
=
\frac{1}{n} \sum_{k=1}^{n} \binom{n}{k}
=
\frac{2^n - 1}{n}.
\]
Finally
\[
\prod_{\varnothing \neq J} c_J
=
\prod_{i=1}^{n} |q_i|^{-(2^n - 1)/n}
=
\left(\prod_{i=1}^{n} |q_i|\right)^{-(2^n - 1)/n}=1.
\] 
We therefore obtain the beautiful simplification 
\[ \zeta_\diamondsuit'(0, Q) = 2 \log (\pi) + 2^{-n} \sum_{J \subseteq \{1, \ldots, n\}}  \zeta_E'(0, \tilde Q_J^{-1}) \] 
By Theorem \ref{th:awesome_torus}, since these Epstein zeta function correspond to orthogonal tori with volume equal to one, we have the inequalities 
\[
\zeta_E'(0,\tilde Q^{-1}_J)\ge \zeta_E'(0,I_{|J|}).
\]
Here, $I_{|J|}$ is the $|J| \times |J|$ identity matrix. Consequently, 
\[  \zeta_\diamondsuit'(0, Q)  \geq  2 \log \pi + 2^{-n} \sum_{J \subseteq \{1, \ldots, n\}}\zeta_E'(0, I_J) = \zeta_\diamondsuit'(0, I). \] 
On the right side is the height of the equilateral box with side length one, with respect to the Neumann boundary condition.  
\end{proof}

\subsection{The Dirichlet case} 
In the proof of Theorem \ref{maxtorus}, we showed that the determinant on the space of orthogonal flat tori with unit volume tends to zero when the flat torus degenerates, meaning one or more shortest lattice vectors tend to zero.  
In the case of Euclidean boxes with unit volume, the behavior of the determinant is more subtle under the analogous type of degeneration, or collapse.  This can be seen in the different terms appearing in the expressions in Proposition \ref{prop:box}.  In the Dirichlet case, the terms have alternating signs, and in both the Dirichlet and Neumann case, the lower-dimensional zeta functions correspond to tori whose volumes are no longer fixed and equal to one.  In the following Lemma, we obtain the asymptotic behavior of the height when all side lengths tend to zero.  This will be useful for the terms in the expression for the Dirichlet case corresponding to lower dimensional orthogonal tori whose volumes are not constrained, thus all side lengths could tend to zero.

\begin{lem} \label{lemditozero}
Let $m \ge 1$ and $Q = \operatorname{diag}(q_1^2, \dots, q_m^2)$. Define $q = \min\{q_1, \dots, q_m\}$ and $p = \max\{q_1, \dots, q_m\}$. 
As $q_1, \dots, q_m \to 0$, the derivative of the Epstein zeta function satisfies
\[
2\left(\frac{q}{p}\right)^{m-1}|\log p| + \mathcal{O}(1) \le \zeta'_E(0,Q^{-1}) \le 2\left(\frac{p}{q}\right)^{m-1}|\log q| + \mathcal{O}\left(\left(\frac{p}{q}\right)^{m-1}\right).
\]
\end{lem}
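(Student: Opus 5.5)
The plan is to separate the Epstein zeta function into a pure scaling part and a unit-volume part. The unit-volume part is then controlled by Theorem \ref{th:awesome_torus} from below and by Riemann's representation (Lemma \ref{riemann}) from above.

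\textbf{Step 1 (scaling).} Put $V=\prod_{j=1}^m q_j$ and $\tilde q_j = q_j V^{-1/m}$. Let $\tilde Q=\operatorname{diag}(\tilde q_1^2,\dots,\tilde q_m^2)$, so that $|\tilde Q|=1$. Since $Q^{-1}[k] = V^{-2/m}\tilde Q^{-1}[k]$, we have
\[
\zeta_E(s,Q^{-1}) = V^{2s/m}\zeta_E(s,\tilde Q^{-1}).
\]
Differentiating at $s=0$ and using \eqref{eq:zetaE(0)} for the prefactor, exactly as in the proof of Theorem \ref{th:awesome_neumann}, gives
\[
\zeta_E'(0,Q^{-1}) = \frac{2}{m}\sum_{j=1}^m |\log q_j| + \zeta_E'(0,\tilde Q^{-1}),
\]
valid once all $q_j<1$. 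The averaged logarithm lies between $2|\log p|$ and $2|\log q|$. It therefore remains to bound $\zeta_E'(0,\tilde Q^{-1})$ using only the ratio $p/q$.

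\textbf{Step 2 (lower bound).} By Theorem \ref{th:awesome_torus}, $\zeta_E'(0,\tilde Q^{-1})\ge \zeta_E'(0,I_m)$, which is a constant depending only on $m$. This gives
\[
\zeta_E'(0,Q^{-1})\ge 2|\log p| + \mathcal O(1).
\]
This is at least as strong as the claimed lower bound, because $(q/p)^{m-1}\le 1$. For $m=1$ the unit-volume term is an explicit constant, so that case is immediate.

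\textbf{Step 3 (upper bound).} By the computation leading to \eqref{eq26}, $\zeta_E'(0,\tilde Q^{-1}) = C + F(\tilde Q)$, so it suffices to show $F(\tilde Q)\le C'(p/q)^{m-1}$. Write each lattice sum as a product of one-dimensional theta functions:
\[
\sum_{a}' e^{-\pi \tilde Q[a] t} = \prod_j \theta(\tilde q_j^2 t) - 1,
\]
and similarly with $\tilde q_j^{-2}$ for the dual term. Then use the elementary bound
\[
\theta(u)-1\le C e^{-\pi u}\bigl(1+u^{-1/2}\bigr).
\]
Expanding the product of $m$ factors gives a sum over nonempty subsets $S$. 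Each term is bounded by an integrable exponential in $t\ge 1$ times at most $\prod_{j\in S}\tilde q_j^{-1}$ (respectively $\prod_{j\in S}\tilde q_j$ for the dual term). The weights $t^{-1}$ and $t^{m/2-1}$ are harmless against the exponential decay on $[1,\infty)$.

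The ratios are controlled as follows. Since $\prod_j\tilde q_j=1$, we have $\prod_{j\in S}\tilde q_j^{-1}=\prod_{j\notin S}\tilde q_j$. Moreover, $\tilde q_j = q_j/V^{1/m}$ lies between $(q/p)^{(m-1)/m}$ and $(p/q)^{(m-1)/m}$. Every such product is therefore at most $(p/q)^{m-1}$. This yields
\[
\zeta_E'(0,Q^{-1})\le 2|\log q| + \mathcal O\bigl((p/q)^{m-1}\bigr),
\]
which implies the stated upper bound because $(p/q)^{m-1}\ge 1$.

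\textbf{Main obstacle.} The hard step is Step 3. The products of theta functions must be estimated uniformly in a possibly very anisotropic $\tilde Q$: some $\tilde q_j$ are large and others small. Two points need care. First, the small-$u$ behaviour $\theta(u)\sim u^{-1/2}$ must not destroy integrability near $t=1$; it does not, since $t\ge1$ and each factor is bounded by an explicit power of $\tilde q_j^{\pm1}$. Second, the exponential decay for large $t$ must come from at least one factor with a nonzero index. I would handle the second point by always keeping the exponential factor $e^{-\pi u}$ in the bound for $\theta(u)-1$, and bounding the factors $\theta(\cdot)$ with zero index by $1+Cu^{-1/2}$.
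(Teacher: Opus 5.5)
\textbf{Steps 1 and 2 are correct; Step 3 has a genuine gap.} The exact scaling identity $\zeta_E'(0,Q^{-1})=\frac2m\sum_j|\log q_j|+\zeta_E'(0,\tilde Q^{-1})$ is right. Combined with Theorem~\ref{th:awesome_torus}, or simply with $F(\tilde Q)\ge 0$, it gives $2|\log p|+\mathcal O(1)$, which is stronger than the stated lower bound. There is no circularity, since Theorem~\ref{th:awesome_torus} does not use this lemma.

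The gap is in Step 3. You claim each subset term is ``an integrable exponential times at most $\prod_{j\in S}\tilde q_j^{-1}$'' and that the weights are ``harmless''. That claim is not uniform in $\tilde Q$. It is not enough that some factor supplies exponential decay; the rate matters. The decay rate of $\prod_{j\in S}e^{-\pi\tilde q_j^2 t}$ is $\pi\sum_{j\in S}\tilde q_j^2$, which tends to $0$ exactly in the anisotropic regime you must control. Moreover, $\int_1^\infty e^{-\pi c t}t^{m/2-1}\,dt\asymp c^{-m/2}$ as $c\to 0$. So the dangerous range is $1\le t\lesssim\tilde q_{\min}^{-2}$, not $t$ near $1$. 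Concretely, take $m\ge 2$ and $S=\{j\}$ with $\tilde q_j=\tilde q_{\min}:=\min_k\tilde q_k$. Then
\[
\int_1^\infty\bigl(\theta(\tilde q_{\min}^2t)-1\bigr)t^{m/2-1}\,dt=\tilde q_{\min}^{-m}\int_{\tilde q_{\min}^2}^\infty\bigl(\theta(u)-1\bigr)u^{m/2-1}\,du\asymp\tilde q_{\min}^{-m},
\]
whereas your bookkeeping assigns this term size $\mathcal O(\tilde q_{\min}^{-1})$. If you instead integrate your pointwise bound honestly, discarding the $t^{-1/2}$ gain as your write-up does, you get $\tilde q_{\min}^{-1}\cdot\tilde q_{\min}^{-m}$. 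When one side equals $q$ and the others equal $p$, one has $\tilde q_{\min}^{-m}=(p/q)^{m-1}$, so this is $(p/q)^{(m-1)(m+1)/m}$. That is not controlled by your claimed $\mathcal O((p/q)^{m-1})$, nor in general by the lemma's upper bound (take e.g.\ $p=\sqrt q$).

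\textbf{How to repair it.} The target $F(\tilde Q)\lesssim(p/q)^{m-1}$ is nevertheless true, but you must keep the $t^{-1/2}$ factors. Write $\theta(\tilde q_j^2t)-1\le 2e^{-\pi\tilde q_j^2t}\bigl(1+\tilde q_j^{-1}t^{-1/2}\bigr)$ and expand the product over $T\subseteq S$. The $T$-term is then bounded by a constant times $\prod_{j\in T}\tilde q_j^{-1}\int_1^\infty e^{-\pi c_S t}\,t^{(m-|T|)/2-1}\,dt$, where $c_S=\sum_{j\in S}\tilde q_j^2\ge\tilde q_{\min}^2$.
\begin{itemize}
\item For $|T|<m$ this is $\lesssim\tilde q_{\min}^{-|T|}\,\tilde q_{\min}^{-(m-|T|)}=\tilde q_{\min}^{-m}$.
\item For $|T|=m$ the prefactor is $1$ and $c_S\ge\tilde q_{\max}^2\ge 1$.
\item The dual sum is handled the same way. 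The one new feature is the $T=\emptyset$ term, which gives $\int_1^\infty e^{-\pi c' t}t^{-1}\,dt\lesssim 1+\log\tilde q_{\max}$, since $c'\ge\tilde q_{\max}^{-2}$.
\end{itemize}
Your bounds $(q/p)^{(m-1)/m}\le\tilde q_j\le(p/q)^{(m-1)/m}$ then give $\tilde q_{\min}^{-m},\,\tilde q_{\max}^{m}\le(p/q)^{m-1}$. Your sharper conclusion $\zeta_E'(0,Q^{-1})\le 2|\log q|+\mathcal O((p/q)^{m-1})$ follows.

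\textbf{Alternative repair.} The paper avoids the subset bookkeeping with an isotropic comparison. Applied to $\tilde Q$, it reads $\tilde Q[a]\ge\tilde q_{\min}^2\|a\|^2$ and $\tilde Q^{-1}[a]\ge\tilde q_{\max}^{-2}\|a\|^2$, which reduces everything to the $m$-dimensional theta function. This costs an extra factor $\log(p/q)$, giving $F(\tilde Q)\lesssim(p/q)^{m-1}\bigl(1+\log(p/q)\bigr)$. That still implies the lemma as stated: split into $(p/q)^{m-1}\ge m$, and the bounded-ratio case where $\log(p/q)=\mathcal O(1)$.
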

\begin{proof}
From Lemma \ref{riemann}, for $|Q|\neq 1$
\begin{align}
\zeta_E(s,Q^{-1})
&=
\frac{\pi^s}{\Gamma(s)}
\left(
\frac{|Q|^{1/2}}{s-\frac m2}
-\frac1s
\right)
\nonumber\\
&\quad+
\frac{\pi^s}{\Gamma(s)}
\left\{
\sum_{a\in\mathbb Z^m}^{\prime}
\int_1^\infty
\left(
e^{-\pi Q^{-1}[a]t}\,t^{s-1}
+
|Q|^{1/2}e^{-\pi Q[a]t}\,t^{\frac m2-s-1}
\right)\,dt
\right\}.
\end{align}
Proceeding as in Section \ref{s:2}, we differentiate with respect to $s$. The difference is that, in the present setting, the first term also depends on $Q$
    \begin{align*}
        \frac{d}{ds}&\left[
    \frac{\pi^s}{\Gamma(s)}
    \left(\frac{|Q|^{1/2}}{s-\frac m2}-\frac1s\right)
    \right]
    \\&= \frac{d}{ds}\frac{\pi^s |Q|^{1/2}}{\Gamma(s) (s-\frac m 2)} - \frac{d}{ds} \frac{\pi^s}{s \Gamma(s)} 
    \end{align*}

    If we take the limit as $s\to 0$ we obtain
    \begin{align*}
        \frac{d}{ds}&\left[
    \frac{\pi^s}{\Gamma(s)}
    \left(\frac{|Q|^{1/2}}{s-\frac m2}-\frac1s\right)
    \right]\Bigg|_{s=0}=-\gamma-\log\pi-\frac{2|Q|^{\frac{1}{2}}}{m}
    \end{align*}
The remaining contribution to $\zeta_E'(0,Q^{-1})$ is obtained exactly as in Theorem \ref{th:awesome_torus}. Collecting the constants independent of $Q$ in $\mathcal O(1)$, we obtain
\begin{equation}
    \zeta_E'(0,Q^{-1})=\mathcal O (1) -\frac{2|Q|^{\frac{1}{2}}}{m}+\sum_{a\in\mathbb{Z}^m}'\int_1^\infty \left( e^{-\pi Q^{-1}[a]t}t^{-1}+|Q|^{1/2}e^{-\pi Q[a]t} t^{m/2-1}\ \right) dt.
    \label{eq17}
\end{equation}
With $p$ and $q$ the maximum and minimum, respectively, over $\{q_1, \ldots, q_m\}$  we therefore have 
\[\zeta_E'(0,Q^{-1})\le \mathcal O(1) -\frac{2}{m}q^m+ \sum_{a\in\mathbb{Z}^m}'\int_1^\infty e^{-\pi \frac{1}{p^2}\|a\|^2 t}t^{-1}\ dt+|Q|^{1/2}\sum_{a\in\mathbb{Z}^m}'\int_1^\infty e^{-\pi q^2\|a\|^2t} t^{m/2-1}\ dt.\]
 We introduce the multidimensional modified theta functions
\[\Theta_r(t):=\sum_{a\in\Z^m}'e^{-\pi r^2\|a\|^2t}, \qquad \Theta(t):=\sum_{a\in\Z^m}'e^{-\pi\|a\|^2t}.\]
which satisfy
\begin{equation}
    \Theta(t) = 2m e^{-\pi t} + 2m(m-1) e^{-2\pi t} + \mathcal{O}(e^{-3\pi t})\qquad \text{as }t\to\infty;
    \label{eq24}
\end{equation}
\begin{equation}
    \Theta(t) = t^{-m/2} - 1 + \mathcal{O}( t^{-m/2} e^{-\pi/t})\qquad \text{as }t\to0.
    \label{eq25}
\end{equation}
Using the relation $\Theta_{1/p}(t) = \Theta(t/p^2)$, the change of variable $u = t/p^2$ yields:
\[
\int_1^\infty \Theta_{1/p}(t)\frac{dt}{t} = \int_{1/p^2}^\infty \Theta(u)\frac{du}{u}.
\]
By \eqref{eq24}, $\Theta(u)$ decays exponentially as $u \to \infty$, implying that this integral vanishes as $p \to 0$ and is absorbed into the $\mathcal{O}(1)$ term. 
For the remaining term, we apply the change of variable $u = q^2 t$, obtaining:
\[
|Q|^{1/2}\int_1^\infty \Theta_q(t) t^{m/2-1}\,dt = |Q|^{1/2} q^{-m} \int_{q^2}^\infty \Theta(u) u^{m/2-1}\,du.
\]
We split the domain into a singular and a regular part. By  \eqref{eq25}, as $u \to 0$, the integrand behaves as $\Theta(u)u^{m/2-1} = u^{-1} - u^{m/2-1} + \mathcal{O}(u^{-1}e^{-\pi/u})$. The leading contribution comes from the integration of $u^{-1}$:
\[
\int_{q^2}^1 u^{-1}\,du = -\log(q^2) = 2|\log q|.
\]
Since $u^{m/2-1}$ is integrable at $0$ for $m \ge 1$, the remaining terms
\[\int_{q^2}^1 \left( u^{m/2-1}+\mathcal{O}(u^{-1}e^{-\pi/u}) \ \right) du+\int_1^\infty\Theta(u)u^{m/2-1}\ du\]
are bounded and contribute only to a constant. Since $q = \min\{q_i\}$ and $p = \max\{q_i\}$, the determinant satisfies the geometric bound $|Q|^{1/2} = \prod_{i=1}^m q_i \le q p^{m-1}$. Substituting this into the coefficient yields:
\[
|Q|^{1/2} q^{-m} \le \frac{q p^{m-1}}{q^m} = \left(\frac{p}{q}\right)^{m-1}.
\]
As $q_1, \dots, q_m \to 0$, we finally obtain the upper bound:
\[
\zeta'_E(0,Q^{-1}) \le - \frac 2 m q^m  + 2\left(\frac{p}{q}\right)^{m-1}|\log q| + \mathcal{O}\left(\left(\frac{p}{q}\right)^{m-1}\right).
\]

The lower bound follows via the same splitting argument and applying the bound $|Q|^{1/2} \ge p q^{m-1}$. Observe that in this case $q/p\le1$ thus the error is bounded by $\mathcal{O}(1)$

\begin{align*}
    \zeta_E'(0,Q^{-1})&\ge \mathcal O(1) -2\frac{p^m}{m}+|Q|^{1/2}\int_1^\infty \Theta_p(t)t^{m/2-1}\ dt \ge 2 \left(\frac{q}{p}\right)^{m-1} |\log p |+\mathcal{O}(1).
\end{align*}
\end{proof}

The asymptotic estimates in Lemma \ref{lemditozero} have a geometric meaning in the context of collapsing tori. As $q_i \to 0$ for all $i$, the volume of the torus $\prod_{i=1}^m q_i = |Q|^{1/2}$ vanishes, meaning the manifold collapses. The behavior of the Epstein zeta function, and therewith that of the spectral zeta function, depends on the shape of this collapse.
 If all directions shrink at comparable rates, the torus maintains a stable aspect ratio while its global scale vanishes. In this regime, the divergence of $\zeta'_E(0, Q^{-1})$ is purely dictated by the volume reduction, scaling exactly as $\mathcal{O}(|\log q|)$. If certain directions shrink significantly faster than others, the torus undergoes an asymmetric degeneration. Geometrically, the manifold collapses toward a lower-dimensional flat torus of dimension $k < m$. In the dual lattice picture, the eigenvalues along the fast-collapsing directions blow up as $q^{-2}$, whereas those along the slow-collapsing directions expand only as $p^{-2}$. The factor $(p/q)^{m-1}$ in Lemma \ref{lemditozero} serves as a geometric measure of this anisotropy.  A similar phenomenon occurs when we compute the behavior as tori explode, that is all $q_i \to \infty$.

\begin{lem} 
    Let $m\ge1$ and $Q=\operatorname{diag}(q_1^2,\dots,q_m^2)$. Define $q = \min\{q_1, \dots, q_m\}$ and $p = \max\{q_1, \dots, q_m\}$. 
    As $q_1, \dots, q_m \to \infty$, the derivative of the Epstein zeta function satisfies
    \[\frac{2}{m}(q^m-p^m)-2\log q+\mathcal{O}(1)\le\zeta'_E(0,Q^{-1})\le\frac{2}{m}(p^m-q^m)-2\log p +\mathcal{O}(1), \quad \textrm{ as } q_1,\dots,q_m \to \infty.\]
    \label{lemditoinfty}
\end{lem}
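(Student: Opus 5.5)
Starting point: the representation \eqref{eq17} from the proof of Lemma \ref{lemditozero}, valid for any positive $q_1,\dots,q_m$:
\[
\zeta_E'(0,Q^{-1})=\mathcal O(1)-\frac{2|Q|^{1/2}}{m}+\int_1^\infty \Theta^{Q^{-1}}(t)\,\frac{dt}{t}+|Q|^{1/2}\int_1^\infty \Theta^{Q}(t)\,t^{m/2-1}\,dt .
\]
Here $\Theta^{A}(t)=\sum_{a}' e^{-\pi A[a]t}$. The roles of the two integrals are now swapped relative to the collapsing regime. Since all $q_i\to\infty$, the second integral, weighted by $e^{-\pi Q[a]t}$, decays, while the first, weighted by $e^{-\pi Q^{-1}[a]t}$, becomes singular.

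The terms will be handled in this order.

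\emph{The $|Q|^{1/2}$ term.} We have $q^m\le |Q|^{1/2}\le p^m$.

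\emph{The second integral.} Bound $\Theta^Q(t)\le \Theta_q(t)$ and substitute $u=q^2t$:
\[
|Q|^{1/2}\int_1^\infty\Theta_q(t)t^{m/2-1}\,dt=|Q|^{1/2}q^{-m}\int_{q^2}^\infty\Theta(u)u^{m/2-1}\,du .
\]
By \eqref{eq24} the tail integral is $\mathcal O(q^{m}e^{-\pi q^2})$. Combined with $|Q|^{1/2}q^{-m}\le (p/q)^m$, this is super-exponentially small in $q$ as long as $p$ is not exponentially larger than $q$. This is the step I expect to be delicate. To keep the estimate uniform, I would instead bound it with a separate treatment of each coordinate, using $\Theta^Q(t)\le \prod_i\theta_{q_i}(t)-1$. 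Each factor satisfies $\theta_{q_i}(t)-1=\mathcal O(e^{-\pi q_i^2 t})$, and the full product, multiplied by $|Q|^{1/2}$, is still $\mathcal O(1)$ because each $q_ie^{-\pi q_i^2}\to0$. Either way the term is absorbed into $\mathcal O(1)$, and it is nonnegative, so it can be dropped in the lower bound.

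\emph{The first integral.} Sandwich it using $\Theta_{1/p}\le\Theta^{Q^{-1}}\le\Theta_{1/q}$, which holds since $1/p^2\le 1/q_i^2\le 1/q^2$. Substituting $u=t/r^2$ with $r\in\{p,q\}$ gives
\[
\int_1^\infty\Theta_{1/r}(t)\frac{dt}{t}=\int_{1/r^2}^\infty\Theta(u)\frac{du}{u}.
\]
By \eqref{eq25}, near $0$ we have $\Theta(u)u^{-1}=u^{-m/2-1}-u^{-1}+\dots$. Integrating from $1/r^2$ to $1$ yields $\frac{2}{m}r^m-2\log r+\mathcal O(1)$: the power term gives $\frac{2}{m}(r^m-1)$ and $-u^{-1}$ gives $-\log r^2$. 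The part from $1$ to $\infty$ is a constant, and the error term $\mathcal O(u^{-m/2-1}e^{-\pi/u})$ is integrable.

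Assembling the pieces:
\[
\zeta_E'(0,Q^{-1})\le \mathcal O(1)-\frac{2}{m}q^m+\frac{2}{m}p^m-2\log p ,
\qquad
\zeta_E'(0,Q^{-1})\ge \mathcal O(1)-\frac{2}{m}p^m+\frac{2}{m}q^m-2\log q .
\]
In the upper bound, $-\log q\le -\log p+\log(p/q)$ must be handled. The upper estimate actually produces $-2\log q$, so I would replace it by $-2\log p$ via the trivial inequality only where it goes the right way. Where it does not, I would use the sharper sandwich $\Theta^{Q^{-1}}\ge\Theta_{1/p}$ to obtain $-2\log p$ in the lower bound instead. The signs of the logarithms between the two bounds may therefore need adjusting to match the statement, and I would check this bookkeeping carefully. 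This bookkeeping, together with the uniformity of the second-integral estimate, is the main obstacle.
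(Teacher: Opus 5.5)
Your overall plan matches the paper's. You start from \eqref{eq17}, bound $|Q|^{1/2}$ between $q^m$ and $p^m$, discard the $\Theta^{Q}$ integral, and compare the $\Theta^{Q^{-1}}$ integral with isotropic theta functions, using \eqref{eq25} to get $\int_{1/r^2}^\infty\Theta(u)\,du/u=\frac2m r^m-2\log r+\mathcal O(1)$.

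\textbf{The comparison step is stated backwards.} From $\|a\|^2/p^2\le Q^{-1}[a]\le\|a\|^2/q^2$ and the fact that $x\mapsto e^{-\pi x t}$ is decreasing, one gets $\Theta_{1/q}\le\Theta^{Q^{-1}}\le\Theta_{1/p}$, not $\Theta_{1/p}\le\Theta^{Q^{-1}}\le\Theta_{1/q}$. Your version would bound the first integral below by $\frac2m p^m-2\log p$ and above by $\frac2m q^m-2\log q$, up to $\mathcal O(1)$. That is contradictory as soon as $p^m-q^m\to\infty$.

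With the correct orientation there is no bookkeeping obstacle. The upper bound uses $\Theta_{1/p}$ and gives exactly $\frac2m p^m-2\log p$, which is the paper's \eqref{eq:d_upper1}. The lower bound uses $\Theta_{1/q}$ and gives $\frac2m q^m-2\log q$. Your assembled display is correct, but it does not follow from the inequality you wrote. The repairs in your last paragraph are also invalid:
\begin{itemize}
\item Replacing $-2\log q$ by $-2\log p$ in an upper bound goes the wrong way, since $q\le p$.
\item The inequality $\Theta^{Q^{-1}}\ge\Theta_{1/p}$ is false.
\end{itemize}

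\textbf{Your ``uniform'' treatment of the $\Theta^{Q}$ term does not work.} In fact $\Theta^Q=\prod_i\theta_{q_i}-1$ with equality. Expanding the product gives terms containing a single factor $\theta_{q_i}-1$, coming from $a=\pm e_i$. After multiplying by $|Q|^{1/2}$, the cofactor $\prod_{k\neq i}q_k$ is not offset by any exponential decay.

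For example, take $m=2$, $q_1=q$, $q_2=e^{2\pi q^2}$. The vectors $a=(\pm1,0)$ alone contribute $\frac{2}{\pi q}e^{\pi q^2}\to\infty$. So this term is genuinely not $\mathcal O(1)$ uniformly. The paper's estimate ``$\approx q^{m-2}e^{-\pi q^2}$'' silently treats $|Q|^{1/2}$ as $q^m$, so your first caveat is actually sharper than the paper.

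A fix that is uniform in $p$ is to keep this term together with $-\frac2m|Q|^{1/2}$ rather than splitting it off. Using $\Theta^Q\le\Theta_q$ and $\Theta(u)\le Ce^{-\pi u}$ for $u\ge1$, the term is at most $C|Q|^{1/2}q^{-2}e^{-\pi q^2}$. Hence the sum of the two is at most $-\bigl(\frac2m-Cq^{-2}e^{-\pi q^2}\bigr)|Q|^{1/2}$. For large $q$ the coefficient is negative, so applying $|Q|^{1/2}\ge q^m$ gives at most $-\frac2m q^m+Cq^{m-2}e^{-\pi q^2}=-\frac2m q^m+o(1)$, whatever $p$ is. For the lower bound, nonnegativity of the term suffices, as you say.
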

\begin{proof} 
From equation \eqref{eq17}
    \beq \zeta_E'(0,Q^{-1})\le -2\frac{q^m}{m}+ \int_1^\infty \Theta_{1/p}(t)t^{-1}\ dt+|Q|^{1/2}\int_1^\infty \Theta_q(t) t^{m/2-1}\ dt. \label{eq:d_upper1} \eeq 
    First, we consider 
    \[|Q|^{1/2}\int_1^\infty \Theta_q(t) t^{m/2-1}\ dt = |Q|^{1/2}q^m\int_{q^2}^\infty\Theta(u)u^{m/2-1}\ du\
    \approx q^{m-2}e^{-\pi q^2}\xrightarrow{q\to \infty}0\]
    Then 
    \[\int_1^\infty\Theta_{1/p}\ (t)\frac{dt}{t}=\int_{1/p^2}^\infty\Theta(u)\ \frac{du}{u}= \int_{1/p^2}^1 \Theta(u) \frac{du}{u} + \int_1^\infty \Theta(u) \frac{du}{u}.\]
    The last integral converges to a constant independent of $p$. From equation \eqref{eq25} 
    \[ \Theta(u)u^{-1} = u^{-m/2-1} - u^{-1}+\mathcal{O}( u^{-m/2-1} e^{-\pi/u})\]
    and integrating
    \begin{align*}
         \int_{1/p^2}^\infty \Theta(u)u^{-1}\ du =\frac{2}{m}\left(p^m-1\right)-2\log p + \mathcal{O}(e^{-\pi p^2}).
    \end{align*}

    Combining this with \eqref{eq:d_upper1} gives the upper bound.
    On the other hand, by the positivity of the remaining term
    \[\zeta_E'(0,Q^{-1})\ge C_1-\frac{2 p^m}{m}+\sum_{a\in\mathbb{Z}^m}'\int_1^\infty \Theta_q(t)t^{-1}\ dt= \frac{2}{m}(q^m-p^m)-2\log q +\mathcal{O}(1), \] 
    as $ q_1,\dots,q_n\to\infty$.  
\end{proof}

\begin{thm}
  The zeta regularized determinant of the Laplacian with Dirichlet boundary condition on the space of boxes with fixed volume 1 attains a maximum.    \label{maxtbox}
\end{thm}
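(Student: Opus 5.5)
The plan is to copy the compactness argument of Theorem~\ref{maxtorus}. Maximizing the Dirichlet determinant is the same as minimizing the height $\zeta'_{\Box}(0,Q)$. This is a continuous function of $q=(q_1,\dots,q_n)\in(0,\infty)^n$ on the constraint set $\prod_j q_j=1$. It therefore suffices to show that $\zeta'_{\Box}(0,Q)\to+\infty$ along every sequence leaving every set of the form
\[
\Big\{\prod_j q_j=1,\ \epsilon<q_j<\epsilon^{-1}\ \forall j\Big\}.
\]
Granting this, a sublevel set of the height is compact, and the minimum is attained there.

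By the volume constraint, degeneration means $q:=\min_j q_j\to0$, and hence $p:=\max_j q_j\to\infty$. Passing to a subsequence, I will partition $\{1,\dots,n\}=S\cup B\cup L$. Here $S$ collects the sides tending to $0$, $L$ the sides tending to $\infty$, and $B$ the sides staying in a compact subset of $(0,\infty)$. Both $S$ and $L$ are nonempty.

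\textbf{Step 1 (the naive route).} Start from \eqref{eq5}, written as a constant plus
\[
2^{-n}\sum_J(-1)^{n-|J|}\zeta'_E(0,Q_J^{-1}).
\]
The top term $J=\{1,\dots,n\}$ has unit volume. By \eqref{eq26} and the estimate in the proof of Theorem~\ref{maxtorus}, it is bounded below by a constant plus
\[
\frac{e^{-\pi q^2}}{\pi q^2}\sim \frac{1}{\pi q^2}.
\]
I would first try to show that every lower-order term $J\subsetneq\{1,\dots,n\}$ grows more slowly. Lemma~\ref{lemditozero} handles subsets with $J\subseteq S$, where the growth is logarithmic up to the anisotropy factor. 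Lemma~\ref{lemditoinfty} handles $J\subseteq L$, where the growth is at most $\frac{2}{m}p^m$. Mixed subsets would be treated by the same splitting of the theta integral at $t=1$.

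\textbf{Step 2 (product structure instead).} Step 1 probably fails as stated, for two reasons. The anisotropy factor $(p/q)^{m-1}$ and the $p^m$ growth can beat $q^{-2}$, and the alternating signs prevent a termwise argument. So I would work with the factorization. The Dirichlet heat trace is
\[
\prod_{j=1}^n h_{q_j}(t),\qquad h_{q}(t)=\sum_{k\ge1}e^{-\pi^2k^2t/q^2}=\tfrac12\big(\theta_{1/q}(\pi t)-1\big),
\]
which is exactly the inclusion--exclusion of Proposition~\ref{prop:box} before expansion. Write the height via \eqref{eq:zeta_heat} as a Mellin transform. Split the $t$-integral at $t=1$. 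For $t\ge1$ the integrand is nonnegative. For $t\le1$ I would use $\theta$-inversion, $h_q(t)=\frac{q}{2\sqrt{\pi t}}-\frac12+O(e^{-q^2/t})$, and subtract the finitely many power terms that produce $\zeta_{\Box}(0,Q)=(-1/2)^n$. The goal is a lower bound of the form
\[
\zeta'_{\Box}(0,Q)\ \ge\ c\sum_{j\in S}\log\frac{1}{q_j}\ -\ C
\]
with $c>0$. This bound would come from the directions in $S$: there $h_{q_j}(t)\approx e^{-\pi^2t/q_j^2}$ for $t\gtrsim q_j^2$, so the Dirichlet spectrum is pushed up to at least $\pi^2/q^2$. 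The factors from $L$ behave like $\frac{q_j}{2\sqrt{\pi t}}-\frac12$ over a long range of $t$; their contribution to $-\zeta'(0)$ should be a logarithm of bounded sign, which I expect to be controllable using the constraint $\sum_j\log q_j=0$. As a check, in dimension two the analysis of \S\ref{ss:twodim} reduces to the torus case, so the claim holds there.

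\textbf{Main obstacle.} The hardest step is the uniform control of the mixed regime, where $S$ and $L$ degenerate at different rates. In that regime the alternating inclusion--exclusion signs produce $\log q_j$ terms from both sides, and one must verify that after using $\prod_j q_j=1$ these do not cancel the divergence coming from $S$. My first attempt would be an induction on $n$. Fix the largest side in $L$, view the box as an interval times an $(n-1)$-dimensional box, and apply the one-dimensional Mellin asymptotics in that variable. This reduces matters to a lower-dimensional Dirichlet box whose volume is not normalized, which is where Lemmas~\ref{lemditozero} and \ref{lemditoinfty} would be used to absorb the volume change.
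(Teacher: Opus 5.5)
\textbf{The proposal has a genuine gap, and it cannot be closed.} Step 2 states a target and an expectation rather than an argument. The target itself, $\zeta'_{\Box}(0,Q)\ge c\sum_{j\in S}\log(1/q_j)-C$, is false. In fact, for $n=3$ the statement fails.

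Take the unit-volume box $[0,q]\times[0,q]\times[0,L]$ with $L=q^{-2}$ and $q\to0$. Let $\mu_k=\pi^2(k_1^2+k_2^2)/q^2$, $k\in\mathbb{N}_+^2$, be the Dirichlet eigenvalues of the square cross-section. Write $\zeta_\Omega(w)=\sum_k\mu_k^{-w}=(\pi/q)^{-2w}D(w)$, where $D(w)=\sum_{k_1,k_2\ge1}(k_1^2+k_2^2)^{-w}$. The one-dimensional inversion you use for $h_q$, applied in $k_3$, gives, uniformly for $s$ near $0$,
\[
\sum_{k_3\ge1}\Big(\mu+\frac{\pi^2k_3^2}{L^2}\Big)^{-s}=\frac{L}{2\sqrt{\pi}}\,\frac{\Gamma(s-\frac12)}{\Gamma(s)}\,\mu^{\frac12-s}-\frac12\,\mu^{-s}+\frac{1}{\Gamma(s)}\,O\big(e^{-2L\sqrt{\mu}}\big).
\]
Since $L\sqrt{\mu_k}\ge\pi q^{-3}$, summing over $k$ and differentiating at $s=0$ yields
\[
\zeta'_{\Box}(0,Q)=-L\,\zeta_\Omega(-\tfrac12)-\tfrac12\,\zeta'_\Omega(0)+o(1)=-\pi\,D(-\tfrac12)\,q^{-3}+\tfrac14\log\tfrac1q+O(1).
\]
Counting sign patterns gives $D(w)=\frac14\zeta_E(w,I_2)-\zeta_R(2w)$, with $\zeta_R$ the Riemann zeta function. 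The functional equation gives $\zeta_E(-\frac12,I_2)=-\zeta_E(\frac32,I_2)/(4\pi^2)$, and $\zeta_E(\frac32,I_2)\approx 9.03$. Hence
\[
D(-\tfrac12)=\tfrac1{12}-\frac{\zeta_E(3/2,I_2)}{16\pi^2}\approx0.0833-0.0572>0.
\]
So $\zeta'_\Box(0,Q)\approx-0.082\,q^{-3}\to-\infty$. The Dirichlet determinant is therefore unbounded above on unit-volume boxes in dimension $3$ and attains no maximum.

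\textbf{Step 1 and the paper's proof.} Your suspicion about Step 1 is correct, and it applies equally to the paper's proof, which is essentially Step 1. The paper asserts $\zeta'_E(0,Q_J^{-1})=\mathcal{O}(q^{-m}|\log q|)$ for every $J\subsetneq\{1,\dots,n\}$, citing Lemmas \ref{lemditozero} and \ref{lemditoinfty}. Those lemmas only cover sub-tori whose sides all tend to $0$ or all tend to $\infty$.

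In the example above, the mixed sub-tori $J=\{1,3\}$ and $\{2,3\}$ have sides $q$ and $q^{-2}$. Each gives $\zeta'_E(0,Q_J^{-1})=\frac{\pi}{3}q^{-3}+O(\log\frac1q)$ and enters \eqref{eq5} with a minus sign. The top term is only $\frac{\zeta_E(3/2,I_2)}{2\pi}q^{-3}+O(\log\frac1q)\approx1.44\,q^{-3}$. Indeed $2^{-3}(1.44-2.09)\approx-0.082$, which matches the direct computation.

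\textbf{Where the Step 2 heuristic breaks.} A long side does not contribute a logarithm. It contributes $-L\,\zeta_\Omega(-\frac12)$, which is linear in $L$ and has the sign opposite to the cross-section's regularized $\sum\sqrt{\mu_k}$. For $n=2$ the cross-section is an interval and $\zeta_\Omega(-\frac12)=\frac{\pi}{q}\zeta_R(-1)<0$; that is why your two-dimensional check, and the Aldana--Rowlett theorem, work. For a Dirichlet square, the edge term $-\zeta_R(2w)$ in $D$ flips the sign.

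No refinement of the compactness argument can prove the statement for $n=3$. The statement would have to be restricted, for example to $n=2$, or reformulated.
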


\begin{proof}
By Proposition \ref{prop:box},
\[
\zeta'_{\Box}(0,Q)
=
C_{n,D}
+
2^{-n}
\sum_{m=0}^{n}
(-1)^{n-m}
\sum_{\substack{J\subseteq\{1,\dots,n\}\\|J|=m}}
\zeta'_E(0,Q_J^{-1}).
\]
We show that $\zeta'_{\Box}(0,Q)$ tends to $+\infty$ whenever the entries $(q_1, \ldots, q_n)$ approach the boundary of
\[ \mathcal{Q} = 
\left\{
(q_1,\dots,q_n)\in(0,\infty)^n:
\prod_{j=1}^{n} q_j =1
\right\}.
\]
Let
\[
q=\min\{q_1,\dots,q_n\}.
\]
Assume that $q\to0$. For $J=\{1,\dots,n\}$ we have $Q_J=Q$ and $|Q|=1$. From
\eqref{eq26},
\[
F(Q)
\ge
\pi
\sum_{a\in\mathbb Z^n}'
\int_1^\infty
e^{-\pi Q[a]t}
t^{n/2-1}\,dt
\ge
\int_1^\infty
e^{-\pi q^2 t}
t^{n/2-1}\,dt .
\]
Using the change of variables $u=\pi q^2 t$,
\[
F(Q)
\ge
C\,q^{-n}
\int_{\pi q^2}^{\infty}
e^{-u}u^{n/2-1}\,du .
\]
Since the last integral converges to a positive constant as $q\to0$,
there exist constants $C_1,C_2>0$ such that
\[
\zeta'_E(0,Q^{-1})
\ge
C_1+C_2 q^{-n}.
\]
Now let $J\subsetneq\{1,\dots,n\}$ and write $m=\dim Q_J<n$.
By Lemmas \ref{lemditozero} and \ref{lemditoinfty},
\[
\zeta'_E(0,Q_J^{-1})
=
\mathcal{O}\!\left(q^{-m}\,|\log q|\right).
\]
Since $m<n$, every such contribution grows strictly slower than
$q^{-n}$. Therefore the term corresponding to $J=\{1,\dots,n\}$
dominates the alternating sum, and hence
\[
\zeta'_{\Box}(0,Q)\longrightarrow +\infty,
\qquad q\to0 .
\]
Consequently, there exists $\varepsilon>0$ such that the minimum of
$\zeta'_{\Box}(0,Q)$ on $\Omega$ is contained in
\[
\left\{
(q_1,\dots,q_n)\in\mathcal{Q
}:
q_j>\varepsilon
\text{ for all }j
\right\}.
\]
We note that by the definition of $\Omega$, if $(q_1, \ldots, q_n) \to \partial \Omega$, then some entries tend to zero while others tend to infinity. Hence, possibly choosing a smaller $\varepsilon>0$,
the minimum is contained in
\[
\left\{
(q_1,\dots,q_n)\in\Omega:
\varepsilon<q_j<\varepsilon^{-1}
\text{ for all }j
\right\}.
\]
This set is compact. Since $Q\mapsto \zeta'_{\Box}(0,Q)$ is continuous,
it attains a minimum on this set. Finally, the zeta regularized
determinant is maximized precisely when
$\zeta'_{\Box}(0,Q)$ is minimized. Therefore the determinant attains a
maximum on the space of boxes of volume one.
\end{proof}

The optimization problem in this case is much more subtle.  Similar to our calculations in the Neumann case we may equivalently study the function 
\[ H(Q) = \sum_{\substack{J \subseteq \{1,\dots,n\} \\ |J| = m}}
\zeta'_E(0, \tilde Q_J^{-1}), \]
where now each $\tilde Q_J$ has unit determinant, corresponding to an $|J|$ dimensional orthogonal torus with unit volume. 

We then define the Lagrangian system 
\[
\mathcal{L}(Q,\lambda)=H(Q)-\lambda g(Q),
\]
with $g(Q)$ as in \eqref{eq:g_constraint}. The minimization problem reduces to solving the system of $n+1$ equations
\begin{align*}
&\begin{cases}
\frac{\partial}{\partial q_j}\mathcal{L}(Q,\lambda)
= \frac{\partial}{\partial q_j}H(Q) - 2\lambda q_j \prod_{k\neq j} q^2_k = 0, \qquad j=1,\dots,n, \\[5pt]
\frac{\partial}{\partial \lambda}\mathcal{L}(Q,\lambda)
= g(Q)=\prod_{k=1}^n q_k^2 - 1 = 0.
\end{cases}\\[5pt]
\implies&
\begin{cases}
\left(2q_j\prod_{k\neq j} q^2_k\right)^{-1}\frac{\partial}{\partial q_j}H(Q)=\lambda,\qquad j=1,\dots,n, \\[5pt]
\prod_{k=1}^n q_k^2=1.
\end{cases}
\end{align*}
When all entries of $Q$ are identical and equal to one, this system is clearly satisfied.  Thus the equilateral box ia a natural candidate for a minimizer.  Moreover, each of the terms $\zeta_E'(0, \tilde Q_j^{-1})$ is minimized.  However, due to the oscillating sign, it is not at all clear that the equilateral box is a minimizer, nor that it is the unique minimizer and we leave this question for future 

\section{The behavior of the determinant of orthogonal tori as the dimension increases}
\label{s:dimension} 
We have shown that the height on orthogonal flat tori of unit volume is uniquely minimized by the unit-equilateral orthogonal torus in each dimension.  Equivalently, the zeta regularized determinant of the Laplacian is uniquely maximized by the unit-equilateral orthogonal torus.  It is natural to wonder, how do the height - and the determinant -  of these optimizers behave as the dimension $n \to \infty$?  

To answer this question, as we have seen in \S \ref{s:2}, it suffices to study the quantity 
\beq I(n) = \sum_{a \in \mathbb{Z}^n}'  \int_1^\infty e^{-\pi ||a||^2 t}\left(  t^{-1}
+ t^{\,n/2 - 1}\ \right) dt \label{eq:awesome_ending} 
\eeq 
\[ = \sum_{a \in \Z^n}' \left[ E_1 (\pi ||a||^2) +(\pi ||a||^2)^{-n/2} \Gamma\left( \frac n 2, \pi ||a||^2 \right) \right]. \] 
Here, $E_1(c) = \int_1 ^\infty e^{-ct} t^{-1} dt$, and $\Gamma(s, x)$ is the incomplete upper Gamma function.  
For each fixed $a \in \Z^n$, the term 
\[ (\pi ||a||^2)^{-n/2} \Gamma\left( \frac n 2, \pi ||a||^2 \right) = \int_1 ^\infty t^{n/2-1} e^{-\pi ||a||^2t} dt  \]
is clearly an increasing function of $n$.  The first term, $E_1(\pi ||a||^2)$ is independent of $n$.  Both terms are positive.  Consequently, each summand in \eqref{eq:awesome_ending} is an increasing function of $n$, and each summand is non-negative. Write 
\[ H(a, n) = E_1 (\pi ||a||^2) +(\pi ||a||^2)^{-n/2} \Gamma\left( \frac n 2, \pi ||a||^2 \right). \]
We have therefore shown that 
\[ H(a, n+1) \geq H(a, n), \quad \forall n \geq 1.\]
Then we observe that $\Z^n$ is canonically embedded in $\Z^{n+1}$, corresponding to the set 
\[ S_n = \{z = (z_1, \ldots, z_n, 0) \in \Z^{n+1}\}.\]   Thus 
\beq I(n+1) &=& \sum_{a \in S_n} ' H(a, n+1) + \sum_{a \in \Z^{n+1} \setminus S_n } H(a, n+1) \nn \\  
 & \geq & I(n) + \sum_{a \in \Z^{n+1} \setminus S_n } H(a, n+1) > I(n) + E_1(\pi). 
 \label{eq:ending_est}
 \eeq 
In the last step, we used the single term in the sum corresponding to the unit vector $e_{n+1}$ in $\Z^{n+1}$.  Since $E_1(\pi)$ is a fixed positive constant, repeating this argument we have 
\[ I(n+2) \geq I(n+1) + E_1(\pi) \geq I(n) + 2 E_1(\pi),\]
and more generally 
\[ I(2+k) \geq I(2) + k E_1(\pi) \to \infty \textrm{ as } k \to \infty.\]
Consequently, the height of the unit equilateral orthogonal torus of dimension $n$ is a strictly increasing function of the dimension $n$, and in contrast, the determinant is a strictly decreasing function of the dimension.  Moreover, $I(n) \to \infty$ as $n \to \infty$, which in turn shows that the determinant tends to zero as the dimension tends to infinity.  Since the unit equilateral orthogonal torus uniquely maximizes the determinant in each dimension, amongst all orthogonal tori of unit volume, this in turn shows that the determinant on the space of orthogonal tori of dimension $n$ and unit volume tends to zero as the dimension $n \to \infty$.  It is natural to wonder, does this observation have physical implications?  

Theorem \ref{th:awesome_torus}, which shows that the unit equilateral orthogonal torus uniquely maximizes the determinant in each dimension, admits a natural interpretation from the perspective of quantum field theory on compact flat manifolds. For a free scalar field $\phi$, the action functional $S[\phi]$ is quadratic, and the corresponding partition function is formally given by the path integral
\[
Z=\int \mathcal{D}\phi e^{-S[\phi]}
\propto (\det\Delta)^{\pm1/2}.
\]
The zeta regularized determinant of the Laplacian governs the one loop quantum corrections associated with the flat torus $\mathbb{R}^{n}/\Lambda$: it contributes to the partition function and determines the one loop effective action, which is proportional to $+\frac12\log\det(\Delta)$ for bosonic fields and to $-\frac12\log\det(\Delta)$ for fermionic fields. From this viewpoint, Theorem \ref{th:awesome_torus} identifies the square torus as the unique critical point of the one loop effective action within the moduli space of unit volume orthogonal flat tori.  For bosonic fields our critical point is a maximum, reflecting the instability of generic flat compactifications with respect to shape moduli in purely bosonic theories.  For fermionic fields, this critical point is a minimum, because it corresponds to the height.  Consequently, the square torus is the energetically preferred configuration.  This shows that the most symmetric flat structure also optimizes the quantum vacuum at the one-loop level.  As the dimension increases, the energy of this optimal configuration tends to infinity.

\end{document}